\documentclass[12pt,twoside]{amsart}
\usepackage{amssymb,amsmath,amsthm, amscd, enumerate, mathrsfs, upgreek}
\usepackage{graphicx, hhline, tikz}
\usepackage[colorlinks=true,pagebackref,hyperindex]{hyperref}
\usepackage[all]{xy}
\usepackage{color}
\usepackage[backrefs, alphabetic, initials]{amsrefs}
\usepackage{color} 
\usepackage{latexsym}
\usepackage[T1]{fontenc} 
\usepackage{fancyhdr}
\usepackage{enumerate}
\usepackage{tikz-cd}
\usepackage{array, longtable}
\usepackage{caption}
\usepackage{mathtools}
\newcolumntype{C}{>{$}c<{$}}
\newcolumntype{L}{>{$}l<{$}}

\title[Fano index]
{On Fano indices of weighted projective spaces}

\date{\today, version 0.01}
\subjclass[2020]{Primary 14J45; Secondary 14M25, 52B20, 52B11, 52A40}
\keywords{Fano index, weighted projective space, integral simplex, majorization}

\author{Haidong Liu}
\address{Sun Yat-sen University, School of Mathematics, Guangzhou, 510275, China}
\email{liuhd35@mail.sysu.edu.cn, jiuguiaqi@gmail.com}

\DeclareMathOperator{\Cl}{Cl}

\DeclareMathOperator{\loc}{loc}
\DeclareMathOperator{\ord}{ord}
\DeclareMathOperator{\im}{im}
\DeclareMathOperator{\wps}{wps}
\DeclareMathOperator{\term}{term}
\DeclareMathOperator{\vol}{vol}
\DeclareMathOperator{\conv}{conv}
\DeclareMathOperator{\age}{age}
\DeclareMathOperator{\PS}{PS}

\newtheorem{thm}{Theorem}[section]
\newtheorem{lem}[thm]{Lemma}
\newtheorem{prop}[thm]{Proposition}
\newtheorem{conj}[thm]{Conjecture}
\newtheorem{cor}[thm]{Corollary}
\newtheorem{ques}[thm]{Question}

\theoremstyle{definition}

\newtheorem{defn}[thm]{Definition}
\newtheorem{rem}[thm]{Remark}
\newtheorem*{ack}{Acknowledgments}

\newtheorem{case}{Case}
\newtheorem*{claim}{Claim}

\newtheorem*{ai}{AI disclosure} 

\makeatletter
 
 \@addtoreset{equation}{section}
\makeatother

\begin{document}

\begin{abstract}
    The Sylvester sequence is defined recursively by $s_1=2$ and $s_i=s_{1}\cdots s_{i-1}+1$. In this paper, we prove that the Fano index of an $n$-dimensional well-formed weighted projective space with canonical singularities is bounded above by $(s_n-1)(2s_n-3)$. This gives an affirmative answer to a conjecture of Chengxi Wang for weighted projective spaces and $\mathbb Q$-factorial toric Fano varieties with Picard number one. 
    
    We also investigate the distribution of Fano indices among $4$-dimensional weighted projective spaces. As the distribution of Fano indices of weighted projective spaces coincides with that of indices of terminal Calabi--Yau varieties in dimension $n\leq 3$, we expect this coincidence to persist also in dimension 4, and more generally, in all dimensions.
\end{abstract}

\maketitle 
\tableofcontents

\section{Introduction}\label{sec1}

A normal projective variety is called \emph{Fano} if its anti-canonical divisor is ample. Weighted projective spaces provide a standard class of examples. For a Fano variety $X$ with canonical singularities, its \emph{Fano index} is defined to be
\[
    q(X) \coloneq \max\{q\in\mathbb Z_{>0} \mid -K_X \sim_{\mathbb Q}qA, \quad A\in \Cl (X) \}. 
\]
This invariant has attracted increasing attention over the last few decades \cites{bs,prokhorov2010, prokhorov2013,kasprzyk, liu-liu1, liu-liu2, jiang-liu-liu, jiang-liu}. In the case of a well-formed weighted projective space $X=\mathbb P(a_1,...,a_{n+1})$, the Fano index is simply the sum of the weights: 
\begin{equation}\label{eq.indexofwps}
    q(X)= a_1+\cdots +a_{n+1}.
\end{equation}

Evidence from numerous examples in the study of Fano varieties suggests that some extremal cases---e.g., those attaining maximal degree or maximal Fano index---are precisely weighted projective spaces. Moreover, in these extremal cases, the associated weights are closely connected with the \emph{Sylvester sequence}.

The Sylvester sequence is defined recursively by $s_1=2$ and $s_i=s_{1}\cdots s_{i-1}+1$, with initial terms $2,3,7,43,1807$; its terms are pairwise coprime.  As $s_i-1$ appears frequently in this paper, we set $y_i\coloneq s_i-1$, and recall their relations as follows:
\begin{align}\label{eq.sylseq}
    y_i=s_i-1&=y_{i-1}s_{i-1}=s_{i-1}(s_{i-1}-1)=y_{i-1}(y_{i-1}+1),\\
    1-\frac{1}{y_i}&=1-\frac{1}{s_i-1}=\frac{1}{s_1}+\cdots +\frac{1}{s_{i-1}}.
\end{align}

Beginning with Hensley’s boundedness theorem \cite{hensley}, Averkov--Kr\"{u}mpelmann--Nill studied in \cite{akn}*{Theorem 2.2} the integral simplices with a unique interior integral point. Their results derived upper bounds for the degrees and Fano indices of $\mathbb Q$-factorial toric Fano varieties with Picard number one and canonical singularities, both of which are $2y_n^2$ (see \eqref{eq.knownb} and the proof of Corollary \ref{cor.pic1} for the Fano index bound). Although the degree bound is sharp, the Fano index bound is not; in more general setting \cite{wang}, Wang conjectured that the sharp Fano index bound should be slightly smaller than $2y_n^2$:

\begin{conj}[\cite{wang}*{Conjecture 3.7}]\label{conj.main}
    Let $X$ be an $n$-dimensional Fano variety with canonical singularities. Then its Fano index is at most $y_n(2y_n-1)$.
\end{conj}

In \cite{wang}*{Theorem 3.2}, Wang showed that the upper bound $q=y_n(2y_n-1)$ is achievable, attained by the weighted projective space $\mathbb P (\frac{q}{s_1},\frac{q}{s_2}, \dots,\frac{q}{s_{n-2}}, y_{n-1}, y_{n-1}-1)$. Recently, Jiang and the author \cite{jiang-liu} verified Conjecture \ref{conj.main} in dimension 3. In this paper, we confirm this conjecture for weighted projective spaces and  $\mathbb Q$-factorial toric Fano varieties with Picard number one.

\begin{thm}\label{thm.main}
    The Fano index of an $n$-dimensional well-formed weighted projective space with canonical singularities is at most $y_n(2y_n-1)$.
\end{thm}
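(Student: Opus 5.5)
We translate the statement into a problem about integral simplices, bound the numbers that arise in that problem, and then show the bound on the sum of weights.

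\textbf{Setup.} Write $X=\mathbb P(a_1,\dots,a_{n+1})$ with $q=a_1+\cdots+a_{n+1}$, and set $\lambda_i=a_i/q$, so that $\sum\lambda_i=1$. The fan of $X$ has rays $v_1,\dots,v_{n+1}$ spanning a lattice $N$ with $\sum a_iv_i=0$. The vertices $v_i$ span a simplex $P$ whose only interior lattice point is the origin, which has barycentric coordinates $(\lambda_1,\dots,\lambda_{n+1})$. Canonicity of $X$ says exactly that $P$ has no lattice points other than the origin in its interior.

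We use the standard consequence of canonicity, as in Averkov--Kr\"umpelmann--Nill. For every integer $k$ with $1\le k\le q-1$, the point $\sum\{k\lambda_i\}v_i$ lies in $N$. Canonicity then forces
\[
\sum_i\{k a_i/q\}\ge 1 \quad\text{for all such } k.
\]
We will aim to show that this condition alone bounds $q$.

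\textbf{Reduction to a Sylvester-type bound.} Order the weights so that $a_1\ge a_2\ge\cdots$. The AKN argument proves a majorization-type statement: after ordering,
\[
\lambda_1\cdots\lambda_j\ \ge\ \frac{1}{s_1\cdots s_j}\quad\text{in a suitable cumulative sense},
\]
and it ends with the bound $1/\lambda_{\min}\le 2y_n^2$. Since $q$ equals the lcm of the denominators of the $\lambda_i$ only under well-formedness, we will use the identity $q=a_{n+1}/\lambda_{n+1}\cdot$ (suitable factor). The sharper statement we want is the following: if $\lambda_{n+1}$ is the smallest weight and $q$ is large, then $a_{n+1}\ge 1$ forces $q\le 1/\lambda_{n+1}$ only when $a_{n+1}=1$.

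\textbf{Plan.}
\begin{enumerate}[(1)]
\item Apply AKN-style majorization to the first $n-1$ coordinates. This should show $\lambda_i\ge 1/s_i$ lexicographically, with equality propagating. In the equality regime we get $\lambda_i=1/s_i$ for $i\le n-2$.
\item What remains is a two-variable problem. The last two weights $\lambda_n,\lambda_{n+1}$ satisfy $\lambda_n+\lambda_{n+1}=1/y_{n-1}-(\text{slack})$, and the condition $\sum\{ka_i/q\}\ge1$ must be checked only on those $k$ where the first $n-1$ fractional parts sum to less than $1$.
\item Take $k=y_{n-1}$ and $k=s_{n-1}$, and more generally multiples of $q/y_{n-1}$. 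Each such $k$ gives a congruence condition on $a_n,a_{n+1}$ modulo a small number.
\item Combine these with $\gcd$ conditions from well-formedness to show $q\le y_n(2y_n-1)$. The extremal weights $(y_{n-1},y_{n-1}-1)$ scaled appropriately should attain the bound.
\end{enumerate}

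\textbf{Non-equality cases.} When some earlier $\lambda_i$ exceeds $1/s_i$ strictly, the gap propagates: the final denominators shrink by a definite factor. We then expect a crude bound well below $y_n^2$, for example $\le y_n^2+O(y_n)$, obtained by redoing the AKN estimate with a perturbed Sylvester recursion.

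\textbf{Main obstacle.} The hardest step is step (4), together with this case split. Closing the gap between $2y_n^2$ and $y_n(2y_n-1)$ requires exploiting integrality of $q$ and well-formedness, not just the real-variable inequality. I would first try to show that $q$ divides something like $\operatorname{lcm}(s_1,\dots,s_{n-2})\cdot b$, where $b$ is the denominator in the two-variable problem. I would then enumerate the possible $b\le 2y_{n-1}s_{n-1}$ and use the canonicity inequality at $k=q/b\cdot m$ to exclude $b=2y_{n-1}^2+\dots$ exceeding the target.
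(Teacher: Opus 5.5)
There is a genuine gap, beginning with the input you plan to exploit. The inequality $\sum_i\{ka_i/q\}\ge 1$ for $1\le k\le q-1$ is just $\age(k)\ge 1$. That holds for \emph{every} weight vector with $\gcd(a_1,\dots,a_{n+1})=1$, canonical or not (Proposition~\ref{prop.age>0}). For example, $\mathbb P(1,\dots,1,N)$ is well-formed with $q=n+N$ arbitrary, so this condition alone cannot bound $q$.

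The nonzero interior lattice points of the simplex are exactly the points $\sum_i\{k\lambda_i\}v_i$ with $2\le k\le q-2$, $\age(k)=1$ and all $\{k\lambda_i\}>0$. Canonicity forbids these. Via $\age(k)+\age(q-k)=n+1-\#\{i\mid q\mid ka_i\}$, this is Kasprzyk's criterion $\age(k)\neq n$ for $2\le k\le q-2$ (Proposition~\ref{prop.criterion}).

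The AKN input is also misstated. It gives \emph{upper} bounds $\lambda_1+\cdots+\lambda_j\le\frac1{s_1}+\cdots+\frac1{s_j}$ on partial sums, hence via Karamata lower bounds on products. It does not give $\lambda_i\ge 1/s_i$ or $\lambda_1\cdots\lambda_j\ge\frac1{s_1\cdots s_j}$, which already fail for $\mathbb P^n$. There is also no rigid ``equality regime''.

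Your bookkeeping is off by one as well. Near the bound the first $n-1$ coordinates are close to $\frac1{s_1},\dots,\frac1{s_{n-1}}$, and the two smallest weights live at scale $y_n$, not $y_{n-1}$. For $n=3$ the extremal space is $\mathbb P(33,22,6,5)$, with $6=y_3$. Finally, ``$q\le 1/\lambda_{n+1}$ only when $a_{n+1}=1$'' is just the identity $q=a_{n+1}/\lambda_{n+1}$.

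More importantly, steps (3)--(4) and the ``non-equality'' case, which you yourself identify as the whole difficulty, are left as hopes. The proposed divisibility-and-enumeration scheme and the claimed bound $y_n^2+O(y_n)$ have no supporting argument. The paper closes the gap as follows.
\begin{enumerate}
\item Assume $q>y_n(2y_n-1)$. Theorem~\ref{thm.lbofn-1} gives $a_n=q\beta_n>y_n-\frac12$, so $a_n\ge y_n$ by integrality.
\item Compare $\beta_1\cdots\beta_n\le\frac1q$ (Theorem~\ref{thm.ubton-1}) with the Karamata bound $\beta_1\cdots\beta_{n-1}\ge\frac{1-ty_{n-1}}{y_{n-1}^2}$, where $t=\beta_n+\beta_{n+1}$ (Lemma~\ref{lem.keygap}). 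This forces $a_n=a_{n+1}=y_n$.
\item Majorize $(\beta_1,\dots,\beta_{n-1})$ by suitably perturbed Sylvester vectors and use $y_n\beta_1\cdots\beta_{n-1}\le 1$. This yields $\beta_i<\frac{2y_n}{s_i(2y_n-1)}$ for $i\le n-1$.
\item Test the single value $k=2y_n-1$. Then $\lfloor ka_i/q\rfloor\le\frac{2y_n}{s_i}-1$ for $i\le n-1$ (these are integers, as $s_i\mid y_n$), and $\lfloor ka_i/q\rfloor=0$ for $i=n,n+1$. Hence $\age(2y_n-1)\ge n$, contradicting Proposition~\ref{prop.criterion}.
\end{enumerate}
None of these ingredients appears in your plan: pinning the two smallest weights, sharp upper bounds on the remaining coordinates, and the choice $k=2y_n-1$ tested against the correct canonicity criterion. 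As written, the proposal does not prove the theorem.
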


A $\mathbb Q$-factorial toric Fano variety $X$ with Picard number one is also called a \emph{fake weighted projective space}, which is a building block of the toric minimal model program (cf. \cite{fujino2003}*{\S 1}). It is known that such a variety admits a finite Galois \'etale in codimension one morphism $\pi\colon Y\coloneq \mathbb P(a_1,\dots, a_{n+1})\to X$ from a well-formed weighted projective space $Y$ (see \cite{kasprzyk2009}*{Proposition 2.2}); moreover, $Y$ has canonical singularities if $X$ does \cite{kollar-mori}*{Proposition 5.20}. Also, if $-K_X\sim_{\mathbb Q}qA$, then $\pi^*A$ is a Weil divisor on $Y$ and
\[
    -K_Y=\pi^*(-K_X)\sim_{\mathbb Q}q(\pi^*A),
\]
which implies that $q(X)\leq q(Y)$. Therefore, as a direct consequence of Theorem \ref{thm.main}, we obtain the following:

\begin{cor}\label{cor.pic1}
    The Fano index of an $n$-dimensional $\mathbb Q$-factorial toric Fano variety with Picard number one and canonical singularities is at most $y_n(2y_n-1)$.
\end{cor}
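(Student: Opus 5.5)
Write $X=\mathbb P(a_1,\dots,a_{n+1})$ with $a_1\ge a_2\ge\cdots\ge a_{n+1}$ and $q=\sum a_i$. The plan is to translate the problem into a statement about rational numbers $x_i\coloneq a_i/q$, which satisfy $\sum x_i=1$. Then derive, from canonicity, unit-fraction-type inequalities strong enough to force $q\le y_n(2y_n-1)$.

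\textbf{Step 1: reduce canonicity to lattice inequalities.} By the standard toric description (Reid's criterion, as used in \cite{akn}), $X$ has canonical singularities if and only if the simplex with vertices $v_i$, where $\sum a_iv_i=0$, contains the origin as its unique interior lattice point. Equivalently: for every integer $k$ with $2\le k\le q-2$ (more precisely, every $k$ not giving the trivial element),
\[
\sum_{i=1}^{n+1}\Bigl\{\frac{k a_i}{q}\Bigr\}\ge 1 .
\]
Two consequences will be used.
\begin{itemize}
\item Taking $k=1$ in the standard form of \cite{akn}*{Theorem 2.2}, and sorting, gives the Hensley/Averkov--Kr\"umpelmann--Nill bound
\[
\frac{1}{x_{n+1}}\le \frac{q}{a_{n+1}}\le 2y_n^2 ,
\]
and more precisely the partial-product inequalities $x_1\cdots x_j\ge$ (something like) $1/(s_1\cdots s_j)$ in majorization form.
\item $q$ can be large only if the leading weights are close to Sylvester proportions: $a_i\approx q/s_i$ for $i\le n-1$.
\end{itemize}

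\textbf{Step 2: the extremal structure.} Order the ratios $q/a_i$. I would prove by induction on $j$, using the majorization-type inequality from \cite{akn}, that either $q\le y_n(2y_n-1)$ already holds, or $a_i=q/s_i$ exactly for $i\le n-2$. In the second case the remaining three weights $a_{n-1},a_n,a_{n+1}$ sum to $q/y_{n-1}$.

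Canonicity applied with multipliers $k$ that are multiples of $y_{n-1}$ makes the first $n-2$ fractional parts vanish. It therefore reduces to a canonicity-type condition on the three-weight problem $\mathbb P(a_{n-1},a_n,a_{n+1})$ with sum $Q=q/y_{n-1}$. That is essentially a $2$-dimensional problem, where the sharp bound is explicit: $Q\le 2y_{n-1}+\ldots$, which corresponds to the extremal example $(y_{n-1},\,y_{n-1}-1)$ in Wang's construction. Multiplying back should give $q\le y_{n-1}\cdot(2y_n-1)\cdot\ldots$. The arithmetic must be checked against $y_n(2y_n-1)$ using $y_n=y_{n-1}(y_{n-1}+1)$.

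\textbf{Step 3: the non-extremal case.} If some $a_i\neq q/s_i$ with $i\le n-2$, the gap in the unit-fraction sum $\sum 1/s_i$ is at least $1/(s_1\cdots s_{j}\cdot(\text{next}))$. A Curtiss/Soundararajan-type greedy estimate for $\sum x_i=1$ under canonical constraints then bounds $q$ by roughly $y_{n-1}^2\cdot O(y_{n-1}^2)$ with a smaller constant. This needs to be shown to be below $y_n(2y_n-1)\approx 2y_{n-1}^4$.

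I expect this step to be the main obstacle. The bound $2y_n^2$ from \cite{akn} is off from the target only by $y_n$, a lower-order term, so one must exclude near-extremal configurations precisely. I would first try to show, using the canonicity inequality with the specific multiplier $k=y_n$ (or $k=q/a_1$-adjacent values), that the smallest two weights must satisfy $a_n+a_{n+1}\ge$ a quantity forcing the correction term $-y_n$. Failing that, I would use a finite check in low dimension combined with the inductive reduction above.
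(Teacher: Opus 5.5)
\textbf{There is a genuine gap: the reduction that actually proves this corollary is missing, and the argument you give for the weighted projective space case does not close.}

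You begin with ``Write $X=\mathbb P(a_1,\dots,a_{n+1})$''. But a $\mathbb Q$-factorial toric Fano variety of Picard number one is in general only a \emph{fake} weighted projective space. Its primitive ray generators span a sublattice $N'\subseteq N$ of possibly nontrivial finite index, and $X$ is a quotient of a weighted projective space by the finite group $N/N'$. You need the reduction the paper uses:
\begin{itemize}
\item There is a finite Galois morphism $\pi\colon Y=\mathbb P(a_1,\dots,a_{n+1})\to X$, \'etale in codimension one, from a well-formed weighted projective space $Y$.
\item $Y$ is canonical because $X$ is (Koll\'ar--Mori, Prop.~5.20, using $K_Y=\pi^*K_X$; combinatorially, interior $N'$-points of the simplex are also interior $N$-points).
\item If $-K_X\sim_{\mathbb Q}qA$ with $A$ Weil, then $\pi^*A$ is a Weil divisor and $-K_Y\sim_{\mathbb Q}q\,\pi^*A$, so $q(X)\le q(Y)$.
\end{itemize}
With this, the corollary follows at once from Theorem~\ref{thm.main}, which you could simply cite.

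Even read as a proof of Theorem~\ref{thm.main} itself, your sketch fails at several points.
\begin{itemize}
\item \emph{Step~1 misstates the criterion.} The inequality $\sum_i\{ka_i/q\}\ge 1$ holds for every $0<k<q$ by well-formedness alone (Proposition~\ref{prop.age>0}), so it carries no information. Canonicity means $\age(k)\neq n$ for $2\le k\le q-2$ (Proposition~\ref{prop.criterion}).
\item \emph{The reduction in Step~2 collapses under the correct criterion.} Suppose $a_i=q/s_i$ for $i\le n-2$ and $y_{n-1}=s_1\cdots s_{n-2}$ divides $k$. Then $\age(k)$ is a sum of three fractional parts, hence at most $2<n$. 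So these multipliers impose no constraint at all on $(a_{n-1},a_n,a_{n+1})$, and there is no $2$-dimensional problem to solve. Concretely, for $n=3$ the extremal space $\mathbb P(33,22,6,5)$ has tail $(22,6,5)$ with $Q=33$, far beyond the bound $6$ for canonical weighted projective planes.
\item \emph{The dichotomy is unsupported.} Nothing justifies ``$q\le y_n(2y_n-1)$ or $a_i=q/s_i$ exactly''. The Averkov--Kr\"umpelmann--Nill inequalities only give partial-sum bounds $\beta_1+\cdots+\beta_j\le\frac1{s_1}+\cdots+\frac1{s_j}$, not equality.
\item \emph{Step~3 is left open}, by your own account.
\end{itemize}

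The paper needs no exact equalities. Assuming $q>y_n(2y_n-1)$, it argues as follows.
\begin{itemize}
\item It pins down $a_n=a_{n+1}=y_n$ (Lemmas~\ref{lem.an} and~\ref{lem.an+1}), using $\beta_n\ge\frac1{2y_n}$, $\beta_1\cdots\beta_n\le\frac1q$ and Lemma~\ref{lem.keygap}.
\item Karamata's inequality then gives the strict bounds $a_i<\frac{q}{2y_n-1}\cdot\frac{2y_n}{s_i}$ for $i\le n-1$ (Lemma~\ref{lem.ubupton-1}).
\item These force $\age(2y_n-1)\ge n$, contradicting canonicity.
\end{itemize}
The decisive multiplier $2y_n-1$ is not a multiple of $y_{n-1}$, so it lies outside the family of multipliers your reduction considers.
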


We are interested not only in the bounds of Fano indices of weighted projective spaces, but also in their distribution. In dimension 4, we find that the Fano indices take values in a set associated with the Euler function $\varphi$.

\begin{thm}[Theorem \ref{thm.disindim4}]
    The Fano index of a $4$-dimensional well-formed weighted projective space with canonical singularities lies in the set $\{m\in\mathbb Z_{>0}\mid \varphi(m)\leq 984\}$.
\end{thm}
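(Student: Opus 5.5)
The plan is to translate the statement into arithmetic of the weights and then argue by finite enumeration. Write $X=\mathbb P(a_1,\dots,a_5)$ well-formed with canonical singularities and $q=\sum a_i$. The first step is to pass to the associated integral simplex, following \cite{akn}. There is a unique relation $\sum a_i v_i=0$ among primitive vectors $v_i$ spanning $N\cong\mathbb Z^4$. Canonicity says the origin is the only interior lattice point of $\conv(v_1,\dots,v_5)$. Equivalently, by the Reid--Tai criterion, for every $k=2,\dots,q-1$ we have
\[
\sum_i \{k a_i/q\}\ge 1 .
\]
Here $k=1$ is automatic, and the same inequality is needed for all $k$ coprime-shifted multiples. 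I would use the standard reduction: the key quantity is the unit fractions $a_i/q$. Put $b_i=a_i/q$, so that $\sum b_i=1$. The condition above says that no nontrivial multiple $k\cdot(b_1,\dots,b_5)$ lands, modulo $\mathbb Z^5$, in the open region where the fractional parts sum to less than $1$.

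The second step explains where the Euler function enters. Write $d=\gcd$ considerations aside and consider the cyclic subgroup $G=\langle (b_1,\dots,b_5)\rangle\subset(\mathbb R/\mathbb Z)^5$, which has order $q$ by well-formedness. For each $k$ with $\gcd(k,q)=1$, the point $k b$ has all coordinates nonzero. Its fractional-part sum $\sigma(k)=\sum\{kb_i\}$ is an integer in $[1,4]$, and $\sigma(k)+\sigma(-k)=5$. Hence $\sigma(k)\in\{1,2,3,4\}$, and the $\varphi(q)$ units split into classes according to $\sigma$. The plan is to show that the units with $\sigma(k)=1$ (the \emph{age-one} units) are very constrained. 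Each such $k$ gives a new weight vector $(\{kb_i\})$ summing to $1$, which is itself a canonical well-formed wps with the same denominator $q$. Two such units $k,k'$ with $\sigma=1$ that differ generate relations that force divisibility conditions. I would aim to prove that the number of units in each $\sigma$-class is bounded by an explicit constant determined by the finitely many "shapes" of canonical 4-dimensional weights. This would give $\varphi(q)\le 984$.

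The third step, and the main obstacle, is obtaining the concrete number $984$. Here I would rely on the classification structure behind Theorem~\ref{thm.main}. Canonical weights $b$ with $\sum b_i=1$ either have a sub-sum equal to a proper unit fraction pattern (an "Egyptian fraction" degeneration, giving a lower-dimensional canonical wps and hence reduction to dimension $\le3$, where indices are known), or they lie in a finite list. I would apply the majorization argument used for Theorem~\ref{thm.main} to show that in the non-degenerate case $q\le y_4(2y_4-1)$ and that $q$ falls into finitely many families. In the degenerate families, $q$ is a multiple of small denominators and $\varphi(q)$ is bounded explicitly. The finite remainder is then checked by computer enumeration of all canonical 4-dimensional weights, known from Kasprzyk's classification of canonical toric Fano simplices. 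On that list, one computes $\max\varphi(q)=984$. I expect the honest proof to be essentially this: the bound $q\le y_4(2y_4-1)=42\cdot83=3486$ from Theorem~\ref{thm.main} makes the set finite, and an exhaustive enumeration of canonical weight vectors with $q\le3486$ verifies $\varphi(q)\le984$ in each case. The conceptual part is only needed to make the enumeration feasible, by restricting via the Reid--Tai inequality at $k=2,3$ to bound the largest weights.
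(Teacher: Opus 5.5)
Your eventual route is the paper's: bound $q$, exhaustively search the well-formed canonical weight vectors, and read off $\varphi(q)$. The paper searches $q\le 3528$ from \eqref{eq.knownb}, prunes the weights with Corollary \ref{cor.bounds} and the Claim, and filters with Proposition \ref{prop.criterion}; it also mentions consulting the existing database. Using $q\le 3486$ from Theorem \ref{thm.main} instead is fine, since that proof does not use the computation.

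\textbf{The gap.} The test you would run is wrong, and in a proof by enumeration the test is the entire mathematical content. Your condition $\sum_i\{ka_i/q\}\ge 1$ for $2\le k\le q-1$ holds for \emph{every} well-formed $X$. Proposition \ref{prop.age>0} gives it using only $\gcd(a_1,\dots,a_5)=1$, so it is not equivalent to canonicity. You have applied the Reid--Tai inequality, which is correct for each chart $\frac{1}{a_j}(a_1,\dots,\hat a_j,\dots,a_5)$ with its four local weights, to the five global fractions $a_i/q$, where the lower bound is automatic.

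\textbf{The correct test.} Your geometric statement (the origin is the only interior lattice point) translates as follows: nonzero interior lattice points correspond exactly to $k\in[2,q-1]$ with $\age(k)=1$ and all $\{ka_i/q\}\neq 0$. By Proposition \ref{prop.symmetry} this becomes Proposition \ref{prop.criterion}: $\age(k)\neq 4$ for $2\le k\le q-2$. So one must forbid age $1$ with nonzero parts (equivalently age $4$), not merely require age at least $1$.

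\textbf{Why the search fails as stated.} It would accept $\mathbb P(1,1,1,1,2997)$, which is well-formed with $q=3001$ prime and $\varphi(q)=3000$. It is not canonical, since $\age(2)=1$ with all parts nonzero. Your pruning via the inequality at $k=2,3$ also yields no bounds at all. The paper's bounds $\frac q4\le a_1\le \frac q2$, $a_i\le \frac{q}{i+1}$, and the Claim's bounds on $a_2,a_3$ all come from the correct criterion at $k=i+1,\ n,\ n-1+i$ and $k=5,6,7$.

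\textbf{Your second step contributes nothing.} Take a unit $k$ with $2\le k\le q-2$. No $a_ik$ or $a_i(q-k)$ is divisible by $q$, so $\age(k)+\age(q-k)=5$. Proposition \ref{prop.criterion} excludes $4$ for both, hence also excludes $1$. So for canonical $X$ the only age-one unit is $k=1$, and there are no two age-one units whose relations could force divisibility. The remaining $\varphi(q)-2$ units simply split evenly between ages $2$ and $3$, which bounds nothing. The constant $984$ is just $\varphi(3486)=\varphi(2\cdot 3\cdot 7\cdot 83)$ for the extremal index, and it enters only through the computed list. The \emph{Egyptian-fraction degeneration} dichotomy in your third step is likewise unsupported, and unnecessary once the search uses the right criterion.
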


In fact, we obtain an explicit list of Fano indices of weighted projective spaces up to dimension 4. As observed in \cite{liu2025}*{\S 1.1}, for dimension $n\leq 3$, these distributions  coincide with the indices of $n$-dimensional terminal Calabi--Yau varieties and the indices of an $n$-dimensional log canonical singularity $P\in X$. We conjecture that this coincidence persists in all dimensions.

\begin{conj}[Conjecture \ref{conj.coincide}]
    The set consisting of the Fano indices of $n$-dimensional well-formed weighted projective spaces with canonical singularities together with $\{1,\dots,n\}$ coincides with both the set of indices of $n$-dimensional terminal Calabi--Yau varieties and the set of indices of an $n$-dimensional log canonical singularity $P\in X$.
\end{conj}

There are some evidences supporting this conjecture, see Proposition \ref{prop.leq3} and  \cites{etw, wang, masamura} for examples. During the preparation of this paper, Jie Liu kindly pointed out to the author that Jihao Liu has provided on his website a counterexample to \cite{etw}*{Conjecture 7.10} in dimension 159. While the construction in \cite{etw}*{Conjecture 7.10} fails, the explicit upper bound might still be $y_n(2y_n-1)$.  

\quad

\textbf{Proof strategy of Theorem \ref{thm.main}.} The proof combines arithmetic information supplied by the age function with product estimates for the barycentric coordinates of the associated lattice simplex. Assuming that the Fano index exceeds the conjectural bound, we first determine the two smallest weights. A majorization argument then provides sufficiently sharp upper bounds for the remaining weights. These bounds force the age at $2y_n-1$ to be $n$, contradicting canonicality.

\begin{ack} 
The author would like to thank Chen Jiang, Alexander Kasprzyk, Jie Liu, and Benjamin Nill for some useful discussions. The author is supported in part by the National Key Research and Development Program of China (No. 2023YFA1009801) and NSFC (No. 12571048).  
\end{ack}

\begin{ai}
The author used ChatGPT and Deepseek as research-assistance tools during the preparation of this manuscript. In particular, ChatGPT Pro 5.5 assisted in exploratory discussions, brought the majorization theory and Karamata’s inequality to the author’s attention, and helped together with Deepseek translate and adapt computational code from Python to multithreaded C++. They were also used for limited language editing. 
The paper was written by the author, who takes full responsibility for the accuracy and content of the paper.
\end{ai}

\section{Preliminaries}

Throughout this paper, we follow \cite{cls} for the general framework of toric geometry and \cite{kollar-mori} for the theory of singularities.

\subsection{Weighted projective space}

Let $a_1\geq \dots \geq  a_{n+1}>0$ be positive integers. A \emph{weighted projective space} $X=\mathbb P(a_1,...,a_{n+1})$ is the quotient variety $(\mathbb A^{n+1}\backslash 0)/\mathbb G_m$, where the multiplicative group $\mathbb G_m$ acts by $t\cdot(x_1,\dots,x_{n+1})=(t^{a_1}x_1, \dots, t^{a_{n+1}}x_{n+1})$. We say that $X$ is \emph{well-formed} if $\gcd(a_1,\dots, \hat a_i,\dots, a_{n+1})=1$ for every $i$.  For details, see \cites{dolgachev,if} for example. 

We restrict our attention to well-formed weighted projective spaces in this paper. The canonical divisor of a well-formed weighted projective space $X=\mathbb P(a_1,...,a_{n+1})$ is given by
\[
    \mathcal O_X(K_X)\simeq\mathcal O_X(-a_1-\cdots-a_{n+1}),
\]
where $\mathcal O_X(1)$ is the ample generator of the Weil divisor class group. Consequently, the Fano index of $X$ is
\[
    q(X)= a_1+\cdots +a_{n+1}.
\]
\subsection{Age function}
The age function, which goes back to Miles Reid’s study of cyclic quotient singularities \cite{reid}, provides an effective way to encode the singularities of a weighted projective space in terms of its weights. In particular, terminality/canonicality can be translated into explicit arithmetic restrictions on the values of the age function. We will use repeatedly Kasprzyk's criterion \cite{kasprzyk}, somewhat different from Reid's original one, to derive bounds on the weights and to exclude possible Fano indices.

Let $X=\mathbb P(a_1,...,a_{n+1})$ be a well-formed weighted projective space of dimension $n\geq 4$, where $a_1\geq \cdots \geq a_{n+1}>0$. Let $q=a_1+\cdots +a_{n+1}$ be the Fano index of $X$.

\begin{defn}
    The \emph{age function} of $X$ is defined as
    \[
        \age(k)\coloneq \sum_{i=1}^{n+1}\{\frac{a_ik}{q}\}=k-\sum_{i=1}^{n+1}\lfloor\frac{a_ik}{q}\rfloor\in \mathbb Z_{\geq 0}
    \]
    for an integer $k$.
\end{defn}

The following results are easy properties about the age function.

\begin{prop}\label{prop.age>0}
    $\age(k)>0$ for any integer $0< k< q$. 
\end{prop}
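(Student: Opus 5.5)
Suppose, to the contrary, that $\age(k)=0$ for some integer $0<k<q$. The plan is to show that this vanishing forces a common factor among the weights, contradicting well-formedness.

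Since every summand $\{a_ik/q\}$ is nonnegative, $\age(k)=0$ means that $\{a_ik/q\}=0$ for every $i$. In other words, $q$ divides $a_ik$ for all $i=1,\dots,n+1$.

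Now set $d=q/\gcd(q,k)$. Because $0<k<q$, the integer $k$ is not divisible by $q$, so $d>1$. For each $i$, the divisibility $q\mid a_ik$ gives $d\mid a_i\cdot (k/\gcd(q,k))$. Since $d$ is coprime to $k/\gcd(q,k)$, it follows that $d\mid a_i$.

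Thus $d>1$ divides every weight $a_1,\dots,a_{n+1}$. Dropping any one weight, $d$ still divides the remaining $n$ weights, so $\gcd(a_1,\dots,\hat a_i,\dots,a_{n+1})\geq d>1$. Here we use $n\geq 1$, so at least one weight remains. This contradicts the assumption that $X$ is well-formed.

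There is no real obstacle in this argument. The only point needing care is the coprimality step that passes from $q\mid a_ik$ to $d\mid a_i$.
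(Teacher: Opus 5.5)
Your proof is correct and follows essentially the same route as the paper: vanishing of the age forces $q\mid a_ik$ for all $i$, hence $q/\gcd(q,k)$ divides every weight. Well-formedness (the weights having no common factor) then gives $q/\gcd(q,k)=1$, i.e.\ $q\mid k$, which is impossible for $0<k<q$.
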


\begin{proof}
    If $\age(k)=0$, then $\{\frac{a_ik}{q}\}=0$, that is, $q\mid a_ik$ for any $i$. It follows that $\frac{q}{\gcd(q,k)}\mid a_i$ for any $i$, and hence $\frac{q}{\gcd(q,k)}\mid \gcd(a_1,\dots,a_{n+1})=1$. Then $q\mid k$, which is impossible.
\end{proof}

\begin{prop}\label{prop.symmetry}
    $\age(k)+\age(q-k)=n+1-\#\{i\mid q \mid a_ik\}$. 
\end{prop}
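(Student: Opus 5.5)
The plan is to prove the identity one index at a time, using the elementary fact that for a real number $x$,
\[
    \{x\}+\{-x\}=\begin{cases} 0 & \text{if } x\in\mathbb Z,\\ 1 & \text{if } x\notin\mathbb Z.\end{cases}
\]
By definition,
\[
    \age(q-k)=\sum_{i=1}^{n+1}\Bigl\{\frac{a_i(q-k)}{q}\Bigr\}=\sum_{i=1}^{n+1}\Bigl\{a_i-\frac{a_ik}{q}\Bigr\}=\sum_{i=1}^{n+1}\Bigl\{-\frac{a_ik}{q}\Bigr\}.
\]
The last equality holds because $a_i$ is an integer and the fractional part is invariant under integer translation.

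Next, add $\age(k)$ to $\age(q-k)$ and pair the terms index by index. Each summand is $\{\frac{a_ik}{q}\}+\{-\frac{a_ik}{q}\}$. By the fact above, it equals $0$ exactly when $\frac{a_ik}{q}\in\mathbb Z$, that is, when $q\mid a_ik$, and equals $1$ otherwise. Summing over $i=1,\dots,n+1$ therefore gives
\[
    n+1-\#\{i\mid q\mid a_ik\},
\]
which is the claimed formula.

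The argument is valid for every integer $k$, so no hypothesis on $k$ is needed. The only point needing care is the first step: one must use the fractional-part form of the age function from the definition, not the floor form, and note that integer shifts do not change fractional parts. The pairwise identity then settles everything, and I expect no real obstacle.
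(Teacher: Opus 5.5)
Your proof is correct and is essentially the paper's argument. You rewrite $\{a_i(q-k)/q\}$ as $\{-a_ik/q\}$ and then apply $\{x\}+\{-x\}=0$ or $1$, according to whether $x\in\mathbb Z$, termwise to count the indices with $q\mid a_ik$.
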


\begin{proof}
    We have $\age(k)+\age(q-k)=\sum_{i=1}^{n+1}(\{\frac{a_ik}{q}\}+\{\frac{a_i(q-k)}{q}\})=\sum_{i=1}^{n+1}(\{\frac{a_ik}{q}\}+\{\frac{-a_ik}{q}\})$. As $\{x\}+\{-x\}=0$ (resp. $=1$) if $x$ is (resp. not) an integer, the conclusion follows.
\end{proof}

Therefore, $1\leq \age(k)\leq n$ by Propositions \ref{prop.age>0} and \ref{prop.symmetry} for $0<k<q$. The following age criterion is given by Kasprzyk:

\begin{prop}[\cite{kasp}*{Proposition 2.5}]\label{prop.criterion}
    $X$ has at worst canonical singularities if and only if $\age(k)\neq n$ for any $2\leq k\leq q-2$.
\end{prop}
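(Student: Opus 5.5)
The statement is Kasprzyk's criterion, quoted from \cite{kasp}. I would reprove it through the standard toric description of $X$ and Reid's age criterion for cyclic quotient singularities.

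\textbf{Step 1: the toric model.} Write $X=\mathbb P(a_1,\dots,a_{n+1})$ as the toric variety of a complete simplicial fan. Its rays are generated by primitive vectors $v_1,\dots,v_{n+1}$ in a lattice $N$ satisfying $\sum a_iv_i=0$. Well-formedness ensures that the $v_i$ are primitive and span $N$. Then $X$ is canonical if and only if the simplex $P=\conv(v_1,\dots,v_{n+1})$ has the origin as its only interior lattice point. This is the usual criterion that each cone $\sigma_j$ (spanned by all $v_i$ with $i\neq j$) contains no nonzero lattice point strictly below the hyperplane through its generators.

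\textbf{Step 2: parametrize the lattice points.} Every lattice point of $N$ can be written as $\sum_i\{\frac{a_ik}{q}\}v_i$ for a unique $k\in\mathbb Z/q$. Indeed, $N$ modulo the sublattice generated by the $v_i$ with $i\neq j$ is cyclic, and barycentric coordinates with respect to the vertices of $P$ are the fractional parts $\{a_ik/q\}$, normalized so that the coordinates sum to an integer.

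\textbf{Step 3: read off the age.} For the point $u_k=\sum_i\{\frac{a_ik}{q}\}v_i$, the coordinate sum equals $\age(k)$. Since $\sum a_iv_i=0$, subtracting $t\sum_i (a_i/q)v_i$ shifts the coordinates without moving the point. Choosing $j$ so that $\{a_jk/q\}/(a_j/q)$ is minimal expresses $u_k$ in the cone $\sigma_j$. After this shift the coordinate sum becomes
\[
\age(k)-\frac{q\{a_jk/q\}}{a_j}.
\]
The point lies strictly below the facet of $\sigma_j$ exactly when this quantity is less than $1$.

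\textbf{Step 4: translate to the criterion.} Reid's criterion can be applied cone by cone, as local orbifold charts. I would aim to show that a nonzero non-canonical point exists if and only if some $k$ has $\age(q-k)=1$ with $k\neq 1$. By Proposition \ref{prop.symmetry}, this is equivalent to $\age(k)=n$ whenever no $a_i k$ is divisible by $q$. The exceptions $k=1$ and $k=q-1$ correspond to the vertices $v_i$ themselves and to the trivial points, which explains the range $2\le k\le q-2$.

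The main obstacle is the bookkeeping in Step 4: handling the $k$ for which $q\mid a_ik$ for some $i$, where the point lies on a face, and checking that such $k$ never produce $\age(k)=n$ spuriously. I would handle this by applying Proposition \ref{prop.symmetry} carefully and restricting to the face.
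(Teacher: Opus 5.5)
The paper gives no proof of this statement; it simply quotes Kasprzyk. Your overall plan is the right one: canonical iff $\mathrm{int}(P)\cap N=\{0\}$, then convert interior lattice points into ages and apply Proposition \ref{prop.symmetry}. But the dictionary in Step 2, which carries the whole argument, is false as stated.

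The points $u_k=\sum_i\{a_ik/q\}v_i$ neither exhaust $N$ nor are distinct. For $\mathbb P^n=\mathbb P(1,\dots,1)$ one gets $u_k=\{\frac{k}{n+1}\}\sum_i v_i=0$ for every $k$, and no vertex $v_j$ is ever of the form $u_k$. The vector $(\{a_ik/q\})_i$ is the barycentric coordinate vector of $u_k$ only when it sums to $1$, i.e.\ when $\age(k)=1$. In general $u_k$ lies in $\age(k)\cdot P$. What $k\in\mathbb Z/q$ parametrizes bijectively is the pair $(u_k,\age(k))\in N\oplus\mathbb Z$ (the box points of the cone over $P\times\{1\}$), not the point $u_k$. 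The cyclic quotient you invoke, $N/\langle v_i : i\neq j\rangle$, has order $a_j$ and gives no $\mathbb Z/q$-parametrization. Because many $k$ land on the same point with unrelated ages, your Step 3 gauge computation is correct but does not turn into a condition of the form $\age(\cdot)=n$.

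The missing lemma is as follows. Let $u\in\mathrm{int}(P)\cap N$ have barycentric coordinates $\mu$. The $v_i$ generate $N$ and their only linear relations are multiples of $(a_1,\dots,a_{n+1})$, so $\mu\in\mathbb Z^{n+1}+\mathbb R(a_1,\dots,a_{n+1})$. The condition $\sum_i\mu_i=1$ forces the real parameter to be $k/q$, and $0<\mu_i<1$ forces $\mu_i=\{a_ik/q\}$. Hence $\mathrm{int}(P)\cap N$ is in bijection with $\{k\in\mathbb Z/q\mid \age(k)=1,\ q\nmid a_ik \text{ for all } i\}$. The origin corresponds to $k=1$, and injectivity follows as in Proposition \ref{prop.age>0}.

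Step 4 has the bookkeeping backwards. The condition must be ``$\age(q-k)=1$ with $q-k\neq 1$'', i.e.\ you must exclude $k=q-1$, the reflection of the origin. With your ``$k\neq 1$'', $k=q-1$ always qualifies, so every $X$ would be declared non-canonical. Your explanation of the range is also wrong: $k=1$ and $k=q-1$ have nothing to do with the vertices. Rather, $\age(1)=1\neq n$, so including $k=1$ is harmless, while $\age(q-1)=n$ always and must be excluded.

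The ``main obstacle'' you flag is not one. If $0<k<q$ and $\age(k)=n$, Propositions \ref{prop.age>0} and \ref{prop.symmetry} give $1\le \age(q-k)=1-\#\{i\mid q\mid a_ik\}$. So no $a_ik$ is divisible by $q$, and $\age(q-k)=1$. Conversely, an age-one $k$ with $d\ge 1$ divisibilities (a boundary point) reflects to age $n-d<n$. With the corrected dictionary the proof closes in two lines, and the detour through Reid's criterion on orbifold charts is unnecessary.
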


The following corollary is a warm-up for our use of the age criterion.

\begin{cor}\label{cor.bounds}
    Suppose $X=\mathbb P(a_1,...,a_n, a_{n+1})$ has canonical singularities and $q\geq n+4$. Then
    \begin{enumerate}
        \item (\cite{kasprzyk2009}*{Theorem 3.5}) $a_i\leq \frac{q}{1+i}$ for any $i$.
        \item $a_1\geq \frac{q}{n}$.
        \item For $1< i\leq n+1$, if $a_1<\frac{2q}{n-1+i}$, then $a_i\geq \frac{q}{n-1+i}$.
    \end{enumerate}
\end{cor}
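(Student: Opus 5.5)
For (1) we simply quote \cite{kasprzyk2009}*{Theorem 3.5}. Both (2) and (3) are proved by contradiction with Proposition \ref{prop.criterion}. If the claimed inequality fails, we exhibit an explicit $k$ with $2\leq k\leq q-2$ and $\age(k)=n$. We use the formula $\age(k)=k-\sum_i\lfloor a_ik/q\rfloor$, together with the upper bound $\age(k)\leq n$ for $0<k<q$ coming from Propositions \ref{prop.age>0} and \ref{prop.symmetry}.

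For (2), suppose $a_1<\frac{q}{n}$. Since $a_1$ is the largest weight, every $a_i<\frac{q}{n}$. Take $k=n$. Then $a_in/q<1$, so all floors vanish and $\age(n)=n$. Because $n\geq 4$ and $q\geq n+4$, we have $2\leq n\leq q-2$. This contradicts Proposition \ref{prop.criterion}.

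For (3), suppose $a_1<\frac{2q}{n-1+i}$ and $a_i<\frac{q}{n-1+i}$, and put $k=n-1+i$.
\begin{itemize}
\item For $j<i$ we have $a_j\leq a_1$, so $a_jk/q<2$ and $\lfloor a_jk/q\rfloor\leq 1$.
\item For $j\geq i$ we have $a_j\leq a_i$, so $a_jk/q<1$ and the floor is $0$.
\end{itemize}
Hence $\age(k)\geq k-(i-1)=n$. If $0<k<q$, the upper bound $\age(k)\leq n$ forces $\age(k)=n$. Clearly $k\geq n\geq 2$, so it remains to ensure $k\leq q-2$.

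I expect the range condition $k\leq q-2$ to be the main obstacle, since $k=n-1+i$ can be as large as $2n$ while we only know $q\geq n+4$. Summing the weight bounds gives
\[
q<(i-1)\frac{2q}{k}+(n+2-i)\frac{q}{k}=\frac{(n+i)q}{k},
\]
which is automatic and yields nothing. So the case $k\geq q-1$ must be handled separately. In that case $a_j<\frac{q}{k}\leq\frac{q}{q-1}$ forces $a_j=1$ for all $j\geq i$, and $a_1<\frac{2q}{q-1}$ forces $a_1\leq 2$ (as $q\geq n+4\geq 8$). Thus $X=\mathbb P(2,\dots,2,1,\dots,1)$, with $t$ weights equal to $2$ and $q=n+1+t\geq n+4$, so $t\geq 3$.

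For this explicit family I would compute the age directly. Taking $k=2$ gives $\lfloor 4/q\rfloor=\lfloor 2/q\rfloor=0$, so $\age(2)=2$, which is not useful. Instead, for $k$ with $q/2\leq k<q$ the $t$ weights $2$ contribute floor $1$ and the weights $1$ contribute $0$, so $\age(k)=k-t$. Choosing $k=n+t\leq q-1$ gives $\age=n$, but this $k$ may equal $q-1$. If so, try instead $k=q-2$, which satisfies $k\geq q/2$ since $q\geq 4$, or $k=\lceil q/2\rceil+\ldots$, adjusting to land on the value $n$ within $[2,q-2]$. As a fallback, use the symmetry of Proposition \ref{prop.symmetry} to move to $q-k$. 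In that fallback I would also have to rule out the possibility that $X$ is genuinely canonical here, because then the premise $a_1<\frac{2q}{n-1+i}$ is incompatible with canonicality for a different reason. I would finish this by checking the small arithmetic directly.
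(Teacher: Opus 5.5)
Your argument for (2) is the paper's. For (3) your route is leaner than the paper's induction. You use only $\lfloor a_jk/q\rfloor\le 1$ for $j<i$ together with the a priori bound $\age(k)\le n$ on $0<k<q$. This avoids the lower bounds $a_j\ge q/(n-1+j)$ for $j<i$, which the paper proves inductively in order to pin those floors at exactly $1$.

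The step you flagged, however, is a genuine gap, and it cannot be closed. Your fallback would show that $\mathbb P(2^t,1^{n+1-t})$ is not canonical, but these spaces are canonical:
\begin{itemize}
\item Well-formedness forces $t\le n-1$.
\item For $0<k<q/2$ one has $\age(k)=k$. This never equals $n$, since $n<q/2$ would mean $t\ge n$.
\item For $q/2\le k<q$ one has $\age(k)=k-t$. This equals $n$ only at $k=n+t=q-1$, which lies outside the range of Proposition~\ref{prop.criterion}.
\end{itemize}
So $k=n+t$ is always $q-1$, while $\age(q-2)=n-1$. No other choice of $k$, and no use of the symmetry, can produce a contradiction.

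Concretely, take $X=\mathbb P(2,2,2,1,1)$. It has a transversal $A_1$ singularity along a plane, hence is canonical, with $n=4$ and $q=8=n+4$. For $i=4$ it satisfies $a_1=2<\frac{16}{7}=\frac{2q}{n-1+i}$, while $a_4=1<\frac{8}{7}=\frac{q}{n-1+i}$. More generally, $\mathbb P(2^t,1^{n+1-t})$ with $3\le t\le n-1$ and $i=t+1$ violates (3). So (3) is false as stated, precisely in the boundary case $n-1+i=q-1$ that you isolated.

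The paper's inductive proof has the same unverified step: it never checks $n-1+i\le q-2$ before invoking Proposition~\ref{prop.criterion}. The statement needs an extra hypothesis, for example $q\ge n+i+1$ (which holds for every $i$ once $q\ge 2n+2$), or the exclusion of this family. Under such a hypothesis your argument is complete as written. The paper's later application in dimension $4$ assumes $q\ge 9$, which happens to avoid $\mathbb P(2,2,2,1,1)$, the only counterexample with $n=4$.
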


\begin{proof}
    For (1), suppose in contrast that $a_1\geq \cdots \geq a_i>\frac{q}{1+i}$. If $a_1\geq \frac{2q}{1+i}$, then $a_1+\cdots +a_i> \frac{2q}{1+i}+\frac{(i-1)q}{1+i}=q$, which is impossible (the case $i=1$ is obvious). So we have 
    \[
        \frac{2q}{1+i}>a_1\geq \cdots \geq a_i>\frac{q}{1+i}.
    \]
    Set $k\coloneq 1+i$. On the one hand, we have $2q>a_jk>q$ for any $j\leq i$, so $q\nmid a_jk$ for $j\leq i$. On the other hand, for $i\leq n$, we have 
    \[
    a_{i+1}\leq q-a_i-\cdots-a_1<q-\frac{iq}{1+i}=\frac{q}{1+i},
    \]
    and hence $0<ka_{n+1}\leq\cdots \leq  ka_{i+1}<q$ for $i\leq n$. 

    Then $\age(1+i)=\age(k)=\sum_{j=1}^{i}\{\frac{a_jk}{q}\}+\sum_{j= i+1}^{n+1}\{\frac{a_jk}{q}\}=\sum_{j=1}^{i}(\frac{a_jk}{q}-1)+\sum_{j=i+1}^{n+1}\frac{a_jk}{q}=k-i=1$. Moreover, we have $\#\{j\mid q \mid a_jk\}=0$ for any $j$ by above discussions. By Proposition \ref{prop.symmetry}, we have $\age(q-k)=n$, contradicting Proposition \ref{prop.criterion}.

    For (2), suppose in contrast that $\frac{q}{n}>a_1\geq \cdots \geq a_{n+1}>0$. Then $\age(n)=\sum_{j=1}^{n+1}\{\frac{a_jn}{q}\}=\sum_{j=1}^{n+1} \frac{a_jn}{q} =n$, contradicting Proposition \ref{prop.criterion}. 

    For (3), we prove it inductively. At the beginning case $i=2$, we have  $q<\frac{n+1}{n}q\leq (n+1)a_1<2q$ by (2) and assumption, hence $\{\frac{a_1(n+1)}{q}\}=\frac{a_1(n+1)}{q}-1$. Suppose in contrast that $a_2<\frac{q}{n+1}$. Then 
    \[
        \age(n+1)=\sum_{j=1}^{n+1}\{\frac{a_j(n+1)}{q}\}=\sum_{j=1}^{n+1}\frac{a_j(n+1)}{q}-1=n,
    \]
    contradicting Proposition \ref{prop.criterion}. Therefore, we must have $a_2 \geq \frac{q}{n+1}$.  

    Suppose for $j<i$, we have the conclusions. For $i$, we have $a_1<\frac{2q}{n-1+i}<\frac{2q}{n-1+j}$ for any $1\leq j<i$. So by the inductive assumption, we have $a_j\geq \frac{q}{n-1+j}$ for $1\leq j<i$.  Hence
    \[
        1<\frac{n-1+i}{n-1+j}\leq \frac{a_j(n-1+i)}{q}\leq \frac{a_1(n-1+i)}{q}<2
    \]
    for any $1\leq j<i$.
    If $\frac{q}{n-1+i}>a_i\geq \cdots \geq a_{n+1}$, then 
    \[
        \age(n-1+i)=\sum_{j=1}^{n+1}\{\frac{a_j(n-1+i)}{q}\}=\sum_{j=1}^{n+1}\frac{a_j(n-1+i)}{q}-\sum_{j=1}^{i-1} 1=n-1+i-(i-1)=n,
    \]
    contradicting Proposition \ref{prop.criterion}. Therefore, we must have $a_i\geq \frac{q}{n-1+i}$.
\end{proof}

\subsection{Integral simplex with one interior integral point}

Let $X=\mathbb P(a_1,...,a_{n+1})$ be a well-formed weighted projective space of dimension $n\geq 4$ with canonical singularities, where $a_1\geq \cdots \geq a_{n+1}>0$. Let $q=a_1+\cdots +a_{n+1}$ be the Fano index of $X$, and set $\beta_i\coloneq \frac{a_i}{q}$ for each $i$. Then $\beta_1\geq \cdots \geq \beta_{n+1}>0$ and $\sum_{i=1}^{n+1}\beta_i=1$. 

It is known (see, e.g., \cite{bb}*{Proposition 2}, \cite{conrads} and  \cite{kasp}*{introduction}) 
that the integral simplex 
\[
    S\coloneq \conv\{v_1,\dots, v_{n+1}\}
\]
\emph{associated with} $X$ has a unique interior point $0$ such that
\begin{enumerate}
    \item $\beta_1v_1+\cdots +\beta_{n+1}v_{n+1}=0$;
    \item the primitive vertices $v_i$ generate the full lattice. 
\end{enumerate}
Here $\beta_1,\dots \beta_{n+1}$ are called the \emph{barycentric coordinates} of the interior point $0$. Denote by $\vol(S)$ the \emph{normalized volume} of $S$, i.e., $n!$ times its Euclidean volume. Let $F_i\coloneq \conv \{v_1,\dots, \hat{v_i},\dots, v_{n+1}\}$ be the facet of $S$ opposite the vertex $v_i$ and $S_i\coloneq \conv(\{0\}\cup F_i)$. 
By considering the signed maximal minors of the matrix with columns $v_1,\dots,v_{n+1}$, we have $\vol(S_i)=|\det F_i|=a_i$. Summing the volumes of the star simplices $S_i$ yields that
\begin{equation}\label{eq.ind-vol}
     \vol(S)=\sum_{i=1}^{n+1} a_i=q.
\end{equation}
In particular, \cite{akn}*{Theorem 2.2} gives
\begin{equation}\label{eq.knownb}
    q=\vol(S)\leq 2y_n^2.
\end{equation}

\begin{rem}
    Note that the volume $\vol(S)$ of the associated simplex $S$ defined in this paper differs from that in \cite{nill}*{Theorem C} and \cite{akn}*{Theorem 2.5} by a constant factor. Under our definition, $\vol(S)$ cannot attain the upper bound $2y_n^2$. For instance, the weighted projective space $\mathbb P(\frac{2y_n}{s_1},\dots,\frac{2y_n}{s_{n-1}}, 1,1)$ (also its corresponding simplex defined in \cites{nill,akn}) attaining the maximal degree $2y_n^2$ (see \cite{akn}*{Theorem 2.11}) has Fano index $q=\frac{2y_n}{s_1}+\cdots+\frac{2y_n}{s_{n-1}}+1+1=2y_n$, which differs from $2y_n^2$ by a factor $y_n$.
\end{rem}


In the following, we summarize several key results concerning bounds of barycentric coordinates of the interior point $0$.

\begin{thm}[product-sum inequalities, \cite{averkov}*{Theorem 1.1}]\label{thm.psineq}
    Keeping the notation established at the beginning of this subsection, we have
    \begin{equation}\label{eq.psineq}
        \beta_1\cdots \beta_j\leq \beta_{j+1}+\cdots +\beta_{n+1}
    \end{equation}
    for any $1\leq j\leq n$.
\end{thm}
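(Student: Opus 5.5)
This inequality is \cite{averkov}*{Theorem 1.1}. If I had to reprove it, I would translate the uniqueness of the interior lattice point into a statement about a one-parameter flow on a torus, and then get the product $\beta_1\cdots\beta_j$ as the measure of a box.

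\textbf{Step 1: the parametrizing group.} Let
\[
G\coloneq\{\lambda\in\mathbb R^{n+1}\mid \textstyle\sum_i\lambda_iv_i\in\mathbb Z^n\}.
\]
This group contains $\mathbb Z^{n+1}$, and by property (1) of the associated simplex it also contains the line $\mathbb R\beta$. A lattice point lies in the interior of $S$ exactly when it equals $\sum_i\lambda_iv_i$ for some $\lambda\in G$ with all $\lambda_i>0$ and $\sum_i\lambda_i=1$. Adding a real multiple of $\beta$ changes only the coordinate sum (by that multiple) and not the point. So the hypothesis that $0$ is the unique interior lattice point says the following. There is no $\lambda\in G\setminus\mathbb R\beta$ with all $\lambda_i>0$ and $\sum_i\lambda_i\le 1$. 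Indeed, the positive multiple of $\beta$ needed to raise the sum to $1$ keeps every coordinate positive, and the resulting point differs from $\beta$, so it represents a nonzero interior lattice point.

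\textbf{Step 2: test vectors.} I would use only elements of the form $\lambda=u\beta-m$ with $u>0$ and $m\in\mathbb Z^{j}\times\{0\}^{n+1-j}$. Assume, for contradiction, that
\[
\beta_1\cdots\beta_j>\varepsilon\coloneq\beta_{j+1}+\cdots+\beta_{n+1}.
\]
For $i>j$ we have $\lambda_i=u\beta_i>0$ automatically. For $i\le j$ I want $\lambda_i=u\beta_i-m_i$ to lie in the box $(0,\beta_i]$. If that holds, then
\[
\textstyle\sum_i\lambda_i\le\sum_{i\le j}\beta_i+u\varepsilon=1-\varepsilon+u\varepsilon .
\]
This is $\le 1$ as long as $u\le 1$, and only slightly more for $u$ slightly above $1$. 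The point is that a "good'' time $u$, at which the projection of the line $u\mapsto u(\beta_1,\dots,\beta_j)$ to $\mathbb R^j/\mathbb Z^j$ lands in the box $B=\prod_{i\le j}(0,\beta_i]$, gives a positive element of $G$ with small coordinate sum. That element is not a multiple of $\beta$ once $m\neq 0$.

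\textbf{Step 3: the measure argument (main obstacle).} I then need a good time $u$ with $m\ne 0$ and with the sum still $\le 1$ after a harmless rescaling. The plan is to compare the proportion of time the linear flow spends in $B$ with the volume of $B$, which is $\beta_1\cdots\beta_j$, via an equidistribution argument on the closure of the orbit. That closure is a subtorus through the corner $0$ of $B$, and I expect its intersection with $B$ to have relative measure at least $\mathrm{vol}(B)$. I would then count visits: a window of $u$ of length about $1/\varepsilon$ must contain a visit with $m\neq0$ precisely when $\beta_1\cdots\beta_j>\varepsilon$. This quantitative pigeonhole/Blichfeldt step is where the product-versus-sum comparison appears, and it is the step I expect to be delicate.

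Two things there need care. First, the box condition has to be traded against the coordinate sum: one should use the sharper bound $\sum_{i\le j}\lambda_i$ rather than $\sum_{i\le j}\beta_i$, and make the crude bound in Step 2 exact. Second, one must exclude the trivial returns $u\in\mathbb Z_{>0}$ with $m=u\cdot e$, where $e$ denotes the vector with first $j$ coordinates equal to $1$ and the rest $0$; these reproduce $\beta$ itself. If the subtorus argument becomes messy, my fallback is Averkov's original induction on $j$. The case $j=1$ is $\beta_1\le 1-\beta_1$, which comes from the lattice point $-v_1$ or a direct reflection argument. For the inductive step I would project from the face spanned by $v_1,\dots,v_{j-1}$ and apply the $(j-1)$ case to the resulting lower-dimensional simplex, whose barycentric coordinates are rescaled by $1-\beta_1-\cdots-\beta_{j-1}$.
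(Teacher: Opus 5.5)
\textbf{Verdict: genuine gap.} The paper does not prove this statement. It quotes Averkov's Theorem~1.1 as a black box, so your opening citation is all the paper uses. Your sketched reproof, however, breaks down exactly at the nontrivial step.

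Steps 1--2 are fine. The vectors $u\beta-m$ with $m\in\mathbb Z^j\times\{0\}$ detect precisely the interior points of $S$ that lie in the lattice spanned by $v_1,\dots,v_j$. Normalizing the coordinate sum to $1$ forces $u=k=1+\sum_i m_i\in\mathbb Z$, and the best choice is $m_i=\lceil k\beta_i\rceil-1$. So what you must show is the arithmetic statement: if $\beta_1\cdots\beta_j>\varepsilon$, then some integer $k\ge 2$ satisfies $\sum_{i\le j}(\lceil k\beta_i\rceil-1)\ge k-1$. This is sharp at $\beta_i=1/s_i$, and Step~3 does not prove it:
\begin{itemize}
\item A visit to $B$ at a time $u>1$ only gives coordinate sum $s\le 1-\varepsilon+u\varepsilon>1$. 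Lowering the sum means subtracting $(s-1)\beta$, which is allowed only if $\lambda_i>(s-1)\beta_i$ for every $i$. So the ``harmless rescaling'' is not harmless; it is the sharp condition itself.
\item The Haar-measure claim is unproven. Even if granted, it averages over a full period of a closed orbit ($\beta$ is rational, and the period can be as large as $q$), whereas you need a hit inside the window $(1,1/\varepsilon]$.
\item Extracting such a hit by comparing two visits only controls $|\lambda_i|<\beta_i$ two-sidedly, so positivity is lost. This is why pigeonhole arguments give far weaker, non-sharp bounds.
\item The ``trivial returns'' $m=ue$ are a red herring: those vectors have negative entries, and within your family the only multiples of $\beta$ are those with $m=0$.
\end{itemize}

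The fallback fails as well. Projecting along $\mathrm{span}(v_1,\dots,v_{j-1})$ does not preserve uniqueness of the interior lattice point. Take $\mathbb P(3,2,1)$ with $v_1=(1,0)$, $v_2=(0,1)$, $v_3=(-3,-2)$ and $j=2$. The image is the segment $[-2,1]$, which has two interior lattice points $-1$ and $0$, and the case $j=1$ would assert $2/3\le 1/3$. Moreover, the projected barycentric coordinates involve only $\beta_j,\dots,\beta_{n+1}$, so no information about $\beta_1\cdots\beta_j$ comes out.

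What does preserve uniqueness is the section $S\cap\mathrm{span}(v_1,\dots,v_j)=\conv(v_1,\dots,v_j,w)$ with $w=\varepsilon^{-1}\sum_{i>j}\beta_iv_i$; this is exactly what your test vectors see. Its normalized volume relative to the lattice spanned by $v_1,\dots,v_j$ is $1/\varepsilon$. So your approach amounts to needing an analogue of Theorem~\ref{thm.ubton-1}, namely $\beta_1\cdots\beta_j\le 1/\vol$, for simplices with one non-lattice vertex. That is the missing ingredient, and nothing in the proposal supplies it.
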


\begin{thm}[\cite{pikhurko}*{Lemma 5}, \cite{averkov}*{Theorem 3.7}, \cite{akn}*{Theorem 3.7}]\label{thm.ubton-1}
    Keeping the notation established at the beginning of this subsection, we have
    \[
        \beta_1\cdots \beta_{n}\leq \frac{1}{\vol(S)}=\frac{1}{q}.
    \]
\end{thm}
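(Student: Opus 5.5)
The plan is to reformulate the inequality as a lattice-point counting statement on a torus and then use a Blichfeldt-type pigeonhole argument. Uniqueness of the interior lattice point $0$ of $S$ will supply the contradiction. Since $\beta_i=a_i/q$, the claim is equivalent to $q\cdot\beta_1\cdots\beta_n\le 1$.

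\textbf{Step 1 (encoding lattice points by $k\in\mathbb Z/q$).} Because $\sum_i\beta_iv_i=0$ and the $v_i$ generate the lattice, every lattice point has the form $p_k=\sum_{i=1}^{n+1}\{k\beta_i\}v_i$ modulo the lattice spanned by the $v_i$. I would show that $k\mapsto p_k$ realizes $\mathbb Z/q$ inside the torus. Concretely, I would use the map
\[
\phi\colon \mathbb Z/q\to\mathbb R^n/\mathbb Z^n,\qquad k\mapsto(\{k\beta_1\},\dots,\{k\beta_n\}),
\]
and check that $\phi$ is an injective group homomorphism. Injectivity should follow from well-formedness: $\gcd(a_1,\dots,a_n)=1$ forces $q\mid k$ whenever all $ka_i/q$ with $i\le n$ are integers. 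The image $H$ therefore has exactly $q$ elements.

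\textbf{Step 2 (pigeonhole).} Let $B=\prod_{i=1}^n[0,\beta_i)$, which has Euclidean volume $\beta_1\cdots\beta_n$. Suppose $q\beta_1\cdots\beta_n>1$. Then the translates $h+B$ for $h\in H$ cannot be pairwise disjoint in the torus $\mathbb R^n/\mathbb Z^n$, which has volume $1$. Hence there are $k\neq l$ such that $k'=k-l\not\equiv 0$ satisfies $k'\beta_i\equiv t_i \pmod 1$ with $|t_i|<\beta_i$ for every $i\le n$.

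From this I would form the lattice point
\[
w=\sum_{i\le n}t_iv_i+s\,v_{n+1},
\]
where $s\equiv k'\beta_{n+1}\pmod 1$. The point $w$ is nonzero because $\phi$ is injective. The key freedom is that $s$ may be shifted by any integer, since $v_{n+1}$ is a lattice vector.

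\textbf{Step 3 (producing a second interior point).} Consider the two lattice points $\pm w$, rewritten using $0=\sum_i\beta_iv_i$ as
\[
\pm w=\sum_{i\le n}(\beta_i\pm t_i)v_i+(\beta_{n+1}\pm s)v_{n+1}-\mu\cdot 0
\]
for a suitable $\mu$. The coefficients $\beta_i\pm t_i$ for $i\le n$ are strictly positive. I would show that for a suitable integer shift of $s$, one of $\pm w$ has strictly positive barycentric coordinates after renormalizing the coefficient sum to $1$. That point would then be a nonzero interior lattice point of $S$, contradicting uniqueness. The argument should be organized by the sign of $\sum t_i+s$. In the threshold case the point lies on the boundary rather than in the interior, which is exactly why the conclusion is the non-strict inequality $\le 1/q$.

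\textbf{Main obstacle.} I expect Step 3 to be the crux, namely controlling the $v_{n+1}$-coefficient simultaneously with the positivity of the others. A naive symmetric-zonotope version of Minkowski's theorem, using $\{\sum t_iv_i:|t_i|<\beta_i\}$ over all $n+1$ indices, only yields $(n+1)q\beta_1\cdots\beta_{n+1}\le1$, which is too weak. So the argument must exploit the asymmetry: the last coordinate is free modulo $1$, and the ordering $\beta_{n+1}\le\beta_i$ has to be used.

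If the direct sign analysis fails, the fallback is the inductive approach of Pikhurko and Averkov--Kr\"umpelmann--Nill. That approach projects along $v_{n+1}$, or passes to the face lattice, to reduce to a rational simplex in one lower dimension. It then applies the product-sum inequalities of Theorem~\ref{thm.psineq} to bound the resulting volume factor by $\beta_1\cdots\beta_n$.
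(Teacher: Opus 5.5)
Your plan works, and it is the standard argument for this bound; the paper gives no proof of its own, only the citations. Step~2 is Blichfeldt's pigeonhole, or equivalently Minkowski's theorem for the centrally symmetric parallelepiped $\{\sum_{i\le n}\mu_i(v_i-v_{n+1}) : |\mu_i|<\beta_i\}$, whose volume relative to the lattice $N$ generated by the $v_i$ is $2^nq\beta_1\cdots\beta_n$. So the symmetric body that works is this parallelepiped, not the zonotope over all $n+1$ directions. Steps~1 and~2 are fine; injectivity of $\phi$ uses only $\gcd(a_1,\dots,a_n)=1$.

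However, you leave Step~3 unproved, and it is immediate, with no case analysis or fallback needed:
\begin{itemize}
\item Replacing $k'$ by $-k'$ if necessary, assume $T=t_1+\cdots+t_n\le 0$, and take $s=-T$.
\item This choice is legitimate: $-T\equiv -k'(1-\beta_{n+1})\equiv k'\beta_{n+1}\pmod 1$. So $(t_1,\dots,t_n,s)-k'(\beta_1,\dots,\beta_{n+1})\in\mathbb Z^{n+1}$, and since $\sum_i\beta_iv_i=0$, the vector $w=\sum_{i\le n}t_i(v_i-v_{n+1})$ lies in $N$.
\item Adding $0=\sum_i\beta_iv_i$ gives
\[
w=\sum_{i\le n}(\beta_i+t_i)\,v_i+(\beta_{n+1}-T)\,v_{n+1}.
\]
The coefficients sum to exactly $1$ and are all strictly positive, since $t_i>-\beta_i$ and $\beta_{n+1}-T\ge\beta_{n+1}>0$. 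Hence $w$ is an interior lattice point of $S$.
\item $w\ne 0$, because the $v_i-v_{n+1}$ are linearly independent and $t\ne0$ by injectivity of $\phi$. This contradicts uniqueness of the interior point.
\end{itemize}

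Several side remarks in your proposal are wrong and should be dropped.
\begin{itemize}
\item There is no threshold or boundary case: the point above always has last coordinate at least $\beta_{n+1}>0$. The non-strict inequality comes solely from the pigeonhole step, since overlap is forced only when $q\beta_1\cdots\beta_n>1$.
\item The ordering $\beta_{n+1}\le\beta_i$ is not used. The same argument gives $q\prod_{i\ne j}\beta_i\le 1$ for every $j$, using $\gcd(a_i : i\ne j)=1$; the ordering only tells you that $j=n+1$ is the strongest instance.
\item The sign that matters is that of $\sum_{i\le n}t_i$, not of $\sum t_i+s$, which vanishes for the correct $s$.
\item ``Renormalizing the coefficient sum to $1$'' must mean adding a multiple of $\sum_i\beta_iv_i=0$. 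Rescaling would move the point.
\item In Step~1, ``modulo the lattice spanned by the $v_i$'' is vacuous, since the $v_i$ span $N$. The correct statement is $H\cong N/\Lambda$, where $\Lambda$ is the index-$q$ sublattice spanned by the $v_i-v_{n+1}$, and your torus coordinates are the first $n$ barycentric coordinates. The proof does not need this: it uses only the injectivity of $\phi$ and the membership $w\in N$.
\end{itemize}
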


\begin{thm}[\cite{akn}*{Theorem 2.1}]\label{thm.lbofn-1}
    Keeping the notation established at the beginning of this subsection, we have
    \[
        \beta_{n}\geq \frac{1}{2y_n}.
    \]
\end{thm}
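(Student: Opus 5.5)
This bound is quoted from \cite{akn}*{Theorem 2.1}. If I were to reprove it from the material above, I would deduce it from the product-sum inequalities of Theorem \ref{thm.psineq} alone. The guiding example is the extremal configuration
\[
\beta_i=\tfrac{1}{s_i}\ (1\le i\le n-1),\qquad \beta_n=\beta_{n+1}=\tfrac{1}{2y_n}.
\]
Here \eqref{eq.sylseq} gives $\sum_{i\le n-1}\beta_i=1-\frac1{y_n}$ and $\beta_1\cdots\beta_{n-1}=\frac1{s_1\cdots s_{n-1}}=\frac1{y_n}$. So \eqref{eq.psineq} with $j=n-1$ is an equality.

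\textbf{Reduction.} For $0\le k\le n-1$ set $t_k\coloneq 1-\sum_{i\le k}\beta_i=\sum_{i>k}\beta_i$ and $P_k\coloneq\beta_1\cdots\beta_k$, with $t_0=P_0=1$. Since $\beta_n\ge\beta_{n+1}$, we have $2\beta_n\ge\beta_n+\beta_{n+1}=t_{n-1}$. By \eqref{eq.psineq} with $j=n-1$ we also have $2\beta_n\ge P_{n-1}$. Hence it suffices to prove
\[
\max(P_{n-1},t_{n-1})\ \ge\ \frac{1}{y_n}
\]
for every decreasing sequence $\beta_1\ge\dots\ge\beta_{n-1}$ with $t_{n-1}>0$ that satisfies $P_k\le t_k$ for all $k<n-1$. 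This is a purely real-variable extremal problem.

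\textbf{Induction.} I would argue by induction on $m$ that $P_k\le t_k$ for $k<m$ and $t_m>0$ imply $\max(P_m,t_m)\ge 1/y_{m+1}$, with equality only when $\beta_i=1/s_i$. The case $m=1$ reads $\max(\beta_1,1-\beta_1)\ge \frac12$, which is immediate. For the step, write $t_m=t_{m-1}-\beta_m$ and $P_m=P_{m-1}\beta_m$. For fixed $(P_{m-1},t_{m-1})$, the quantity $\max(P_{m-1}\beta_m,\,t_{m-1}-\beta_m)$ is minimized at $\beta_m=t_{m-1}/(1+P_{m-1})$, with minimum value $P_{m-1}t_{m-1}/(1+P_{m-1})$. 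This value is increasing in both $P_{m-1}$ and $t_{m-1}$. If both $P_{m-1}$ and $t_{m-1}$ are at least $1/y_m$, the value is at least $\frac{1/y_m^2}{1+1/y_m}=\frac{1}{y_m(y_m+1)}=\frac1{y_{m+1}}$ by \eqref{eq.sylseq}, and the step closes.

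\textbf{Main obstacle.} The induction hypothesis only gives the maximum of $P_{m-1}$ and $t_{m-1}$, not both. I must rule out the situation where one of them is small, typically $P_{m-1}\ll 1/y_m$ while $t_{m-1}$ is large. Two extra pieces of information are available. First, the constraint $P_{m-1}\le t_{m-1}$ holds. Second, the monotonicity $\beta_m\le\beta_{m-1}$ caps how much mass can be moved into a single later coordinate. I would first try to strengthen the induction statement to a two-parameter one: along the feasible region the pair $(P_k,t_k)$ stays above an explicit curve through the Sylvester points $(1/y_{k+1},1/y_{k+1})$. I would prove this by a majorization/Karamata argument. The comparison is between $(\beta_1,\dots,\beta_k)$ and $(1/s_1,\dots,1/s_k)$, and the convex function is $-\log$. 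The idea is that if the partial sums lag behind the Sylvester sums, then the product is forced up.

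If that attempt fails, the fallback is a compactness argument. The feasible set is compact, so a minimizer of $\max(P_{n-1},t_{n-1})$ exists. A local perturbation argument then shows that at this minimizer every constraint $P_k\le t_k$ is tight. The tight equalities determine $\beta_k=1/s_k$ recursively, and at that point the minimum equals $1/y_n$.
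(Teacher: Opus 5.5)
The paper gives no proof of this statement beyond citing \cite{akn}*{Theorem 2.1}, so your opening sentence agrees with it. The self-contained argument you sketch, however, has a genuine gap, and it sits exactly at the step that carries the content.

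Your reduction to $\max(P_{n-1},t_{n-1})\ge \frac{1}{y_n}$ is correct. The induction cannot close, though, by any bookkeeping of the pair $(P_{m-1},t_{m-1})$. In your step you minimize $\max(P_{m-1}\beta_m,\,t_{m-1}-\beta_m)$ over all $\beta_m$, which discards the constraint $\beta_m\le\beta_{m-1}$. Without that ordering the claim is false: for $m=2$, the choice $\beta_1=\varepsilon$, $\beta_2=1-2\varepsilon$ satisfies $P_1\le t_1$, yet $\max(P_2,t_2)=\varepsilon$.

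Neither of your repairs is carried out. The ``explicit curve'' is never specified. The tightness of every $\PS(k)$ at a minimizer of $\max(P_{n-1},t_{n-1})$ is only asserted, and it is not a routine perturbation. The mass transfer from the first proof of Lemma \ref{lem.keygap} preserves $\sum_{i\le n-1}\beta_i$, so it leaves $t_{n-1}$ unchanged and does not lower your objective once $t_{n-1}\ge P_{n-1}$. On the other hand, increasing some $\beta_l$ tightens every $\PS(k)$ with $k\ge l$, and it must also respect the ordering.

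The missing ingredient is the partial-sum bound \cite{akn}*{Lemma 4.5}, which the paper itself uses in Lemma \ref{lem.keygap}:
\[
\sum_{i\le j}\beta_i\le\sum_{i\le j}\frac{1}{s_i}\quad\text{for } j\le n-1.
\]
Your Karamata idea proves this directly, once you aim it at partial sums rather than at a curve in the $(P,t)$-plane.

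Take $j$ minimal with $\sum_{i\le j}\beta_i>\sum_{i\le j}\frac{1}{s_i}$, and set $\beta_j'\coloneq\sum_{i\le j}\frac{1}{s_i}-\sum_{i<j}\beta_i$. Minimality gives $\frac{1}{s_j}\le\beta_j'<\beta_j$. Hence $(\beta_1,\dots,\beta_{j-1},\beta_j')$ is nonincreasing, positive, and majorized by $(\frac{1}{s_1},\dots,\frac{1}{s_j})$. Lemma \ref{lem.karamata} and \eqref{eq.sylseq} then give
\[
\beta_1\cdots\beta_j>\beta_1\cdots\beta_{j-1}\beta_j'\ge\frac{1}{s_1\cdots s_j}=\frac{1}{y_{j+1}}=1-\sum_{i\le j}\frac{1}{s_i}>1-\sum_{i\le j}\beta_i=\beta_{j+1}+\cdots+\beta_{n+1},
\]
which contradicts \eqref{eq.psineq}.

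Taking $j=n-1$ yields $\beta_n+\beta_{n+1}\ge\frac{1}{y_n}$, hence $\beta_n\ge\frac{1}{2y_n}$. This is exactly how the proof of Lemma \ref{lem.keygap} obtains $x_m\ge\frac{1}{y_m}-t$. No $\max(P,t)$ formulation or compactness argument is needed.
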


The following lemma is a special case of \cite{akn}*{Theorem 4.3}, from which we derive the following more concrete results.

\begin{lem}\label{lem.lbupton-2} 
    Keeping the notation established at the beginning of this subsection, we have
    \[
        \beta_1\cdots \beta_{n-1}\geq \frac{9}{32y_{n-1}^2}>\frac{1}{4y_{n-1}^2}.
    \]
\end{lem}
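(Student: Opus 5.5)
Lemma \ref{lem.lbupton-2} is stated as a special case of \cite{akn}*{Theorem 4.3}, so the plan is to specialize that theorem and then check a numerical inequality. As I recall it, \cite{akn}*{Theorem 4.3} says the following. Let $\beta_1\geq\dots\geq\beta_{n+1}>0$ sum to $1$ and satisfy the product-sum inequalities \eqref{eq.psineq}. Then for each $j$ the partial product $\beta_1\cdots\beta_j$ is bounded below by its value on an explicit extremal Sylvester-type configuration. For small $j$ that configuration is $\beta_i=1/s_i$. Near the end it is perturbed, since the last few coordinates must split the remaining mass $1/y_{j}$ while still respecting \eqref{eq.psineq}. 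I would take $j=n-1$. The hypotheses hold for our $\beta_i$ by Theorem \ref{thm.psineq} together with $\sum\beta_i=1$, which the setup of this subsection guarantees.

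The main step is identifying the extremal configuration for $j=n-1$ and evaluating its product. I expect the minimizer to be $\beta_i=1/s_i$ for $i\leq n-2$. The remaining mass $1/y_{n-1}$ is then shared among $\beta_{n-1},\beta_n,\beta_{n+1}$, and the optimum should take $\beta_{n-1}=\beta_n$, chosen as small as \eqref{eq.psineq} permits. Concretely, I would minimize $\beta_{n-1}$ subject to $\beta_1\cdots\beta_{n-1}\leq\beta_n+\beta_{n+1}$ and $\beta_{n-1}\geq\beta_n\geq\beta_{n+1}$.

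Using $1/(s_1\cdots s_{n-2})=1/y_{n-1}$, I would argue as follows. Write $\beta_{n-1}=\beta_n=x$ and $\beta_{n+1}=1/y_{n-1}-2x$. The constraint becomes $x/y_{n-1}\leq 1/y_{n-1}-x$, and ordering forces $x\geq 1/(3y_{n-1})$. Solving this optimization should produce the constant $\tfrac{3}{4}\cdot\tfrac{1}{y_{n-1}}\cdot\tfrac{3}{8}$, that is, a product of the form $\tfrac{9}{32y_{n-1}^2}$ after accounting for the prefactor $1/y_{n-1}$ from the first $n-2$ coordinates. I would verify this arithmetic carefully against the statement of \cite{akn}*{Theorem 4.3}, since the exact shape of the extremal point, for instance $\beta_{n-1}=3/(8y_{n-1})$-type values, is where I am least certain. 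The main obstacle is matching the general formula in \cite{akn} to this index and simplifying it with the Sylvester relations \eqref{eq.sylseq}.

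Finally, $\frac{9}{32}>\frac{8}{32}=\frac14$ gives the strict inequality $\frac{9}{32y_{n-1}^2}>\frac{1}{4y_{n-1}^2}$.
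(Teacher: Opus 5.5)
There is a genuine gap in the main step. You assert the extremal configuration ($\beta_i=1/s_i$ for $i\le n-2$, followed by a balanced tail) but do not derive it, and it is only one member of the family that actually has to be examined. What \cite{akn}*{Lemma 4.2(a)} gives is a minimizer of $x_1\cdots x_{n-1}$ of the form
\[
y(l)=\Bigl(\tfrac{1}{s_1},\dots,\tfrac{1}{s_{l-1}},\tfrac{1}{(n-l+2)y_l},\dots,\tfrac{1}{(n-l+2)y_l}\Bigr)
\]
for some unknown $l$. By \cite{akn}*{Claim 4.3.1} one may take $1\le l\le n-1$. This yields $\beta_1\cdots\beta_{n-1}\ge 1/g(l)$ with $g(l)=(n-l+2)^{n-l}y_l^{n-l+1}$, and one must then maximize $g$ over all such $l$. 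Your ansatz, once optimized, is exactly $y(n-1)$.

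For $n\ge 5$ the function $g$ is increasing. The paper proves this using \cite{akn}*{Claim 4.3.3} for $l\ge 4$ and a direct check for $l\le 3$. So $l=n-1$ is indeed the worst case there, with bound $1/(3y_{n-1}^2)$, but this comparison is exactly what your proposal skips. For $n=4$ the claim fails: $g(1)=125$, $g(2)=128$ and $g(3)=108$. The worst case is therefore $l=2$, namely $(\frac12,\frac18,\frac18,\frac18,\frac18)$, with product $\frac1{128}$.

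Your arithmetic also does not produce the claimed constant. Within your ansatz, $\PS(n-1)$ reads $x/y_{n-1}\le 1/y_{n-1}-x$, which only gives the upper bound $x\le 1/s_{n-1}$. The lower bound $x\ge 1/(3y_{n-1})$ comes from $\beta_n\ge\beta_{n+1}$. So the minimum is $x/y_{n-1}=1/(3y_{n-1}^2)$, and the factorization $\frac34\cdot\frac1{y_{n-1}}\cdot\frac38$ has no source.

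Moreover, $1/(3y_{n-1}^2)$ is not a valid bound when $n=4$. The space $\mathbb P(4,1,1,1,1)$ is well-formed with canonical singularities (its ages at $k=2,\dots,6$ are $1,2,2,3,3$). It has $\beta_1\beta_2\beta_3=\frac1{128}<\frac1{108}=\frac1{3y_3^2}$. The constant $\frac{9}{32}$ is exactly $y_3^2/128$, i.e.\ the $n=4$, $l=2$ value. The lemma is thus a uniform weakening of $\frac1{128}$ for $n=4$ and $\frac1{3y_{n-1}^2}$ for $n\ge 5$.

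To repair the argument you need the whole family $y(l)$, the reduction to $l\le n-1$, and the case-by-case comparison of $g(l)$. Your final step $\frac9{32}>\frac14$ is fine.
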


\begin{proof}
    By \cite{akn}*{Lemma 4.2(a)}, there exists an Izhboldin--Kurliandchik minimizing solution of function $x_1\cdots x_{n-1}$ attained by
    \[
        y(l)=\left(\frac{1}{s_1},\cdots, \frac{1}{s_{l-1}}, \frac{1}{(n-l+2)(s_l-1)}, \cdots,  \frac{1}{(n-l+2)(s_l-1)}\right)
    \]
    for some $1\leq l\leq n+1$. Note that $y(1)=(\frac{1}{n+1},\cdots, \frac{1}{n+1})$. By  \cite{akn}*{Claim 4.3.1}, we can further assume that $l\leq n-1$. Therefore,
    we have
    \[
        \beta_1\cdots \beta_{n-1}\geq \frac{1}{s_1\cdots s_{l-1}(n-l+2)^{n-l}(s_l-1)^{n-l}}=\frac{1}{(n-l+2)^{n-l}y_l^{n-l+1}}
    \]
    for some $1\leq l\leq n-1$. In the case $n=4$, simple calculation shows that 
    \[
        \beta_1\beta_2 \beta_{3}\geq \frac{1}{128}=\frac{9}{32y_3^2}>\frac{1}{144}=\frac{1}{4y_3^2}.
    \]
    In the case $n\geq 5$,  we show that the function $g(l)=(n-l+2)^{n-l}y_l^{n-l+1}$ is increasing in $l$. Indeed, if $l\geq 4$, then $s_l\geq s_4=43>4e$, where $e$ is the Euler's number, and hence \cite{akn}*{Claim 4.3.3} provides that $g(l)\leq g(l+1)$; if $l\leq 3$, we can check by $(1+\frac{1}{m})^m<e$ that 
    \[
        g(1)=(n+1)^{n-1}\leq g(2)=2^{n-1}n^{n-2}\leq g(3)=6^{n-2}(n-1)^{n-3}\leq g(4)=42^{n-3}(n-2)^{n-4}.
    \]
    In conclusion, $g(l)$ is increasing, and hence the maximum occurs at $l=n-1$. Therefore, 
    \[
        \beta_1\cdots \beta_{n-1}\geq \frac{1}{3y_{n-1}^2}> \frac{9}{32y_{n-1}^2}>\frac{1}{4y_{n-1}^2}. \qedhere
    \]
\end{proof}

Generalizing the product-sum inequalities \eqref{eq.psineq}, we obtain the following \emph{generalized product-sum inequalities}  \eqref{eq.tailpsineq}, which will be referred to as $\PS(j)$. The proof of the following lemma is analogous to those in \cite{akn}*{Lemma 4.2}. We will give an alternative proof in subsection \ref{sub.majority}. Both proofs rely on \cite{akn}*{Lemma 4.5}.

\begin{lem}\label{lem.keygap}
    Let $x_1\geq x_2\geq \cdots \geq x_m>0$, where $x_1+\cdots +x_m=1-t$ such that $0<t<\frac{1}{y_m}$. Suppose that 
    \begin{equation}\label{eq.tailpsineq}
        x_1\cdots x_j\leq x_{j+1}+\cdots +x_m+t
    \end{equation} 
    for all $1\leq j\leq m-1$. Then 
    \[
        x_1\cdots x_m\geq \frac{1-ty_m}{y_m^2}.
    \]
\end{lem}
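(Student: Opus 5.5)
The plan is to minimize $x_1\cdots x_m$ over the feasible region and identify the minimizers explicitly, in the spirit of the Izhboldin--Kurliandchik argument behind \cite{akn}*{Lemma 4.2}. Write $P_j\coloneq x_1\cdots x_j$ and $T_j\coloneq 1-(x_1+\cdots+x_j)$, so that $T_m=t$. Since $x_{j+1}+\cdots+x_m+t=T_j$, condition \eqref{eq.tailpsineq} reads $P_j\le T_j$ for $1\le j\le m-1$. The expected extremal point is
\[
x_i=\tfrac{1}{s_i}\ (i<m),\qquad x_m=\tfrac{1}{y_m}-t .
\]
By \eqref{eq.sylseq} this point has $P_{m-1}=T_{m-1}=\frac1{y_m}$, and its product is exactly $\frac{1}{y_m}\bigl(\frac1{y_m}-t\bigr)=\frac{1-ty_m}{y_m^2}$. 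The hypothesis $t<\frac1{y_m}$ is precisely what makes $x_m>0$ here, so the lemma says this point is the minimizer.

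\emph{Step 1: existence of a minimizer.} The feasible set (decreasing positive $x_i$ with sum $1-t$ and $P_j\le T_j$) is bounded. Its closure allows $x_m=0$, where the product would vanish, so I first need to rule out degenerate limits. I will show that $P_j\le T_j$ forces $T_{m-1}$, and hence $x_m=T_{m-1}-t$, to stay bounded away from $t$. The tool is \cite{akn}*{Lemma 4.5}, the Sylvester-type statement: if $P_j\le T_j$ for all $j\le m-1$, then $T_{m-1}\ge P_{m-1}$ and $P_{m-1}$ cannot be smaller than the greedy value unless $T_{m-1}$ becomes larger in a controlled way. Quantitatively, I want the inequality $P_{m-1}(T_{m-1}-t)\ge \frac1{y_m}(\frac1{y_m}-t)$, which is the full claim since $x_m=T_{m-1}-t$.

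\emph{Step 2: a perturbation argument.} Take a minimizer and consider two coordinates $x_a>x_b$. Replacing them by $x_a+\varepsilon$ and $x_b-\varepsilon$ keeps the sum fixed and strictly decreases the product. It also changes $P_j$ and $T_j$ only for $a\le j<b$: there $T_j$ drops by $\varepsilon$ and $P_j$ rises. Hence, at a minimizer, for every such pair some constraint $P_j\le T_j$ with $a\le j<b$ must be tight, or the ordering must block the move. The same bookkeeping shows the minimizer has the shape $y(l)$ of Lemma~\ref{lem.lbupton-2}: a Sylvester prefix $x_i=\frac{1}{s_i}$ for $i<l$ (each $P_i=T_i$ tight), followed by a block of equal coordinates. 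In that block the sum constraint now reads $T_m=t$ instead of $0$, and the $j=m-1$ constraint is possibly tight.

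\emph{Step 3: comparing the candidates.} For each $l$, the block of $m-l+1$ equal entries has common value $c$ determined by the sum condition $(m-l+1)c=\frac1{y_l}-t$ (or by the tightness of $P_{m-1}\le T_{m-1}$). I then compute the product and check that it is at least $\frac{1-ty_m}{y_m^2}$, with equality at $l=m$. The check is a one-variable inequality, linear in $t$ on $[0,\frac1{y_m})$, so it suffices to verify it at both endpoints. At $t=0$ it is of the type handled by the monotonicity of $g(l)$ in the proof of Lemma~\ref{lem.lbupton-2}, using \cite{akn}*{Claim 4.3.3} and $(1+\frac1k)^k<e$. At $t=\frac1{y_m}$ the right-hand side is $0$, so that endpoint is trivial.

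The main obstacle I expect is Step 2. Proving that every minimizer has the form "Sylvester prefix plus equal block" in the presence of the extra slack $t$ needs care, because the last constraint $P_{m-1}\le T_{m-1}=x_m+t$ no longer ties $x_m$ to the product as rigidly as when $t=0$. If the perturbation bookkeeping gets messy, my first fallback is an induction on $m$. I would apply the statement for $m-1$ to $(x_2,\dots,x_m)$ rescaled by $1-x_1$, with $t$ replaced by $\frac{t}{1-x_1}$, and use $x_1\le\frac12$, which is $P_1\le T_1$. The induction would then reduce to a two-variable inequality in $x_1$ and $t$, to be checked via $y_m=y_{m-1}(y_{m-1}+1)$.
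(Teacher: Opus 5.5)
Your proposal has a genuine gap. The shape of the minimizer is never actually established, and the workaround you build on that shape is not valid.

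\textbf{Step 1 is circular.} You reduce existence of a minimizer to $P_{m-1}(T_{m-1}-t)\ge \frac{1}{y_m}\bigl(\frac{1}{y_m}-t\bigr)$, which is the lemma itself. What \cite{akn}*{Lemma 4.5} actually gives is a partial-sum bound. Its hypotheses are exactly the constraints $P_j\le T_j$ for $j\le k$, so applying it to $x_1,\dots,x_k$ for each $k\le m-1$ yields $x_1+\cdots+x_k\le \frac{1}{s_1}+\cdots+\frac{1}{s_k}$. For $k=m-1$ this already gives $x_m\ge \frac{1}{y_m}-t>0$, and hence compactness.

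\textbf{Step 2 asserts the key structure without proof.} Your observation that the transfer makes only $\PS(j)$ with $a\le j<b$ tighter is correct. But you then assert a ``Sylvester prefix plus equal block'' shape imported from Lemma~\ref{lem.lbupton-2}, expecting the ordering constraints to block transfers. That shape is never derived.

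\textbf{Step 3 is invalid.} For $l<m$ the candidate product $\frac{1}{y_l}\bigl(\frac{1/y_l-t}{m-l+1}\bigr)^{m-l+1}$ is not linear in $t$. It is convex of degree $m-l+1$, so its difference with the linear right-hand side is convex. Checking the endpoints therefore says nothing about the interior.

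\textbf{The induction fallback also fails.} After rescaling by $1-x_1$, the $j$-th constraint for $(x_2,\dots,x_m)$ requires $x_2\cdots x_{j+1}\le (1-x_1)^{j-1}(x_{j+2}+\cdots+x_m+t)$. The hypothesis only gives this with the factor $x_1^{-1}\ge 2$ in place of $(1-x_1)^{j-1}\le 1$. For instance, $m=3$, $(x_1,x_2,x_3)=(0.45,0.35,0.19)$, $t=0.01$ satisfies all hypotheses, yet $x_2>x_3+t$.

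\textbf{The missing idea} is that the ordering never blocks the transfer, so the minimizer is unique and there is nothing to compare. Suppose some $\PS(j)$ is strict at a minimizer. Take a maximal run $j_1\le j\le j_2$ of strict constraints, so that $\PS(j_1-1)$ and $\PS(j_2+1)$ are equalities (treating $\PS(0)$ and $\PS(m)$ as formal equalities). Then $x_{j_1-1}\ge x_1\cdots x_{j_1-1}=x_{j_1}+\cdots+x_m+t>x_{j_1}$, and similarly $x_{j_2+1}>x_{j_2+2}$, using $t>0$ in the edge case. So moving a small $\varepsilon$ from $x_{j_2+1}$ to $x_{j_1}$ preserves the ordering. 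It tightens only the strict constraints $\PS(j)$ for $j_1\le j\le j_2$, relaxes the later ones, and strictly lowers the product. Hence every $\PS(j)$ is an equality at the minimizer. This forces $x_j=\frac{1}{s_j}$ for $j<m$ and $x_m=\frac{1}{y_m}-t$; this is the paper's first proof.

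A shorter route is also available. The partial-sum bounds above say precisely that $\bigl(\frac{1}{s_1},\dots,\frac{1}{s_{m-1}},\frac{1}{y_m}-t\bigr)$, which is decreasing with total $1-t$, majorizes $(x_1,\dots,x_m)$. Karamata's inequality (Lemma~\ref{lem.karamata}) then finishes the argument in one line; this is the paper's second proof.
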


\begin{proof}
    Fix an integer $k$ such that $1\leq k\leq m-1$. As 
    \[
        x_1\cdots x_j\leq x_{j+1}+\cdots +x_m+t=1-\sum_{i=1}^j x_i
    \]
    for any $1\leq j\leq k$, we may apply \cite{akn}*{Lemma 4.5} to $x_1,\dots, x_k$ and obtain that 
    \[
        x_1+\cdots +x_k\leq \frac{1}{s_1}+\cdots +\frac{1}{s_k}.
    \]
    In particular,
    \[
        x_1+\cdots +x_{m-1}\leq \frac{1}{s_1}+\cdots +\frac{1}{s_{m-1}}=1-\frac{1}{y_m}.
    \]
    It follows that $x_m=1-t-(x_1+\cdots +x_{m-1})\geq \frac{1}{y_m}-t>0$. Therefore, the domain
    \[
        \mathcal X^m_t \coloneq 
        \left\{
        (x_1,\ldots,x_m)\in\mathbb R^m \;\middle|\;
        \begin{aligned}
        &x_1+\cdots +x_m=1-t,\\
        &x_1\geq \cdots \geq x_m\geq \frac{1}{y_m}-t,\\
        &x_1\cdots x_j \leq x_{j+1}+\cdots+x_m+t \quad (1\le j\le m-1)
        \end{aligned}
        \right\}
    \]
    is compact. So we can consider the Izhboldin–Kurliandchik minimizing solution of function $x_1\cdots x_m$ on the domain $\mathcal X_t^m$. We claim that at a minimizing solution of $x_1\cdots x_m$, every inequality $\PS(j)$ in \eqref{eq.tailpsineq} must be an equality.

    Suppose in contrast that $\PS(j')$ is strict for some $j'$. Then we can find $1\leq j_1\leq j'\leq j_2\leq m-1$ such that $\PS(j)$ is strict for any $j_1\leq j\leq j_2$, and $\PS(j)$ are equalities for $j=j_1-1$ and $j=j_2+1$. Here we formally set $\PS(0)$ and $\PS(m)$ as equality. As $x_1\cdots x_{j_1-1}=x_{j_1}+\cdots +x_m+t>x_{j_1}$, we have $x_{j_1-1}>x_1\cdots x_{j_1-1}>x_{j_1}$; similarly we have $x_{j_2+1}>x_{j_2+2}$. Hence, for a sufficiently small $\varepsilon>0$, replacing $x_{j_1}$ by $x_{j_1}+\varepsilon$ and $x_{j_2+1}$ by $x_{j_2+1}-\varepsilon$, the sum and the order in definition of $\mathcal X_t^m$ remain valid. For the inequality $\PS(j)$, if $j<j_1$, then the modification lies on the right hand side of $\PS(j)$, hence nothing is changed; if $j_1\leq j\leq j_2$, then the inequality $\PS(j)$ is strict, hence remains valid when $\varepsilon$ is sufficiently small; if $j> j_2$, then the modification lies on the left hand side of $\PS(j)$, where the product is multiplied by 
    \[
        \frac{(x_{j_1}+\varepsilon)(x_{j_2+1}-\varepsilon)}{x_{j_1}x_{j_2+1}}=1+\frac{\varepsilon(x_{j_2+1}-x_{j_1})-\varepsilon^2}{x_{j_1}x_{j_2+1}}<1.
    \]
    Therefore, the perturbation is feasible, while the total product is strictly smaller. This contradicts minimality,  and hence every $\PS(j)$ is an equality.

    Then the first equality  $\PS(1)$ gives $x^*_1=1-x^*_1$, i.e., $x^*_1=\frac{1}{s_1}$. Inductively, we have $x^*_j=\frac{1}{s_j}$ for $1\leq j\leq m-1$. It follows that $x^*_m=1-t-(\frac{1}{2}+\cdots +\frac{1}{s_{m-1}})=\frac{1}{y_m}-t$ and 
    \[
        x_1\cdots x_m\geq x^*_1\cdots x^*_m=\frac{1}{s_1}\frac{1}{s_2}\cdots\frac{1}{s_{m-1}}(\frac{1}{y_m}-t)=\frac{1-ty_m}{y_m^2}. \qedhere
    \]
\end{proof}

\subsection{Majorization}\label{sub.majority}

The author's attention was drawn to majorization theory by ChatGPT. This theory provides a systematic framework for comparing the unevenness of distributions of entries between two vectors. It underlies many classical inequalities and appears in various areas, such as convex analysis and optimization. The central theorem of this theory is the majorization inequality lemma, also named Karamata's inequality. We state a special case for the concave function $\log(x)$ in the following.

\begin{lem}[Karamata's inequality]\label{lem.karamata}
    Let $a=(a_1,\dots, a_m)$ and $b=(b_1,\dots, b_m)$ be two real vectors such that $a_1\geq \cdots \geq a_m>0$ and $b_1\geq \cdots \geq b_m>0$. If $a$ majorizes $b$, i.e., 
    \begin{enumerate}
        \item $a_1+\cdots +a_m=b_1+\cdots +b_m$,
        \item $a_1+\cdots +a_j\geq b_1+\cdots +b_j$ for $1\leq j<m$, 
    \end{enumerate}
    then $\log a_1+\cdots +\log a_m\leq \log b_1+\cdots +\log b_m$, equivalently, $a_1\cdots a_m\leq b_1\cdots b_m$.
\end{lem}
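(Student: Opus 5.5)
We prove Karamata's inequality for the strictly concave function $\log$ directly, by Abel summation against the slopes of $\log$ between corresponding entries. No earlier result of the paper is needed.

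\textbf{Step 1: define the slopes.} For each $i$ set
\[
c_i\coloneq \frac{\log a_i-\log b_i}{a_i-b_i}\quad \text{if } a_i\neq b_i, \qquad c_i\coloneq \frac{1}{a_i}\quad \text{if } a_i=b_i.
\]
In either case $\log a_i-\log b_i=c_i(a_i-b_i)$.

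\textbf{Step 2: the slopes are non-decreasing in $i$.} Since $\log$ is concave, the secant slope $c_i$ is a non-increasing function of each of its two endpoints $a_i,b_i$. Both sequences are non-increasing in $i$, so $a_{i+1}\leq a_i$ and $b_{i+1}\leq b_i$, which gives $c_i\leq c_{i+1}$. This monotonicity of secant slopes is the one point that needs care. It should be checked using the three-chord lemma, including the degenerate cases where the secant is replaced by the derivative $1/x$.

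\textbf{Step 3: Abel summation.} Put $D_j\coloneq \sum_{i\le j}(a_i-b_i)$ and $D_0\coloneq 0$. By hypothesis (2), $D_j\geq 0$ for $1\leq j<m$, and by hypothesis (1), $D_m=0$. Summation by parts gives
\[
\sum_{i=1}^m(\log a_i-\log b_i)=\sum_{i=1}^m c_i(D_i-D_{i-1})=c_mD_m+\sum_{j=1}^{m-1}D_j(c_j-c_{j+1}).
\]
The first term vanishes because $D_m=0$. Each remaining term is $\leq 0$, since $D_j\ge0$ and $c_j-c_{j+1}\le 0$ by Step 2. Hence
\[
\sum_{i=1}^m\log a_i\leq\sum_{i=1}^m\log b_i.
\]
Exponentiating gives $a_1\cdots a_m\leq b_1\cdots b_m$.

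\textbf{Obstacle.} The only nontrivial step is Step 2, the monotonicity of the slopes $c_i$. Positivity of all entries guarantees that $\log$ is defined and concave on the whole interval containing every $a_i$ and $b_i$, so the three-chord lemma applies there.
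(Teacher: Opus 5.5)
Your proof is correct. The paper does not prove this lemma: it quotes it as the classical Karamata (majorization) inequality specialized to the concave function $\log$. So there is no competing argument to compare against. What you wrote is the standard textbook proof, Abel summation against secant slopes, which the paper takes as known.

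Your summation-by-parts identity is right, including the boundary terms: $D_0=0$ removes $c_1D_0$, and hypothesis (1) removes $c_mD_m$. The sign argument then uses hypothesis (2) exactly once.

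The step you flag as delicate can be closed without the three-chord lemma. For all $x,y>0$,
\[
\frac{\log x-\log y}{x-y}=\int_0^1\frac{dt}{(1-t)y+tx}\quad (x\neq y),\qquad \frac{1}{x}=\int_0^1\frac{dt}{(1-t)x+tx}.
\]
Hence $c_i=\int_0^1\frac{dt}{(1-t)b_i+ta_i}$ in every case, including $a_i=b_i$. Since $(1-t)b_{i+1}+ta_{i+1}\le (1-t)b_i+ta_i$ for every $t\in[0,1]$, you get $c_i\le c_{i+1}$ immediately.

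Compared with the paper's bare citation, your route makes the exposition self-contained. It works unchanged for any differentiable concave function in place of $\log$. It also shows where each hypothesis enters: positivity so that $\log$ and the slopes are defined, and sortedness of both vectors only in Step 2. These are exactly the conditions the paper checks before each use of the lemma, e.g. $z_1\ge\cdots\ge z_{n-1}>0$ in the proof of Lemma~\ref{lem.ubupton-1}.
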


Before using it to prove Lemma \ref{lem.ubupton-1}, we give an alternative proof of Lemma \ref{lem.keygap} as a direct application of Karamata's inequality.

\begin{proof}[Proof of Lemma \ref{lem.keygap}]
    Consider $a=(\frac{1}{s_1},\dots,\frac{1}{s_{m-1}},\frac{1}{y_m}-t)$ and $x=(x_1,\dots, x_m)$. By definition, $\frac{1}{s_1}\geq \cdots\geq \frac{1}{s_{m-1}}\geq \frac{1}{y_m}-t>0$. As 
    \[
        \sum_{i=1}^ma_i=\frac{1}{s_1}+\cdots+\frac{1}{s_{m-1}}+\frac{1}{y_m}-t=1-t=\sum_{i=1}^mx_i,
    \]
    and $\frac{1}{s_1}+\cdots +\frac{1}{s_j}\geq x_1+\cdots +x_j$ for every $1\leq j< m$ by \cite{akn}*{Lemma 4.5}, we obtain that $a$ majorizes $x$. Then Karamata's inequality implies that 
    \[  
        x_1\cdots x_m\geq a_1\cdots a_m=\frac{1}{s_1}\dots\frac{1}{s_{m-1}}(\frac{1}{y_m}-t)=\frac{1}{y_m}(\frac{1}{y_m}-t)=\frac{1-ty_m}{y_m^2}. \qedhere
    \]
\end{proof}

\section{Proof of Theorem \ref{thm.main}}

This section is devoted to the proof of Theorem \ref{thm.main}. Let $X=\mathbb P(a_1,...,a_{n+1})$ be a well-formed weighted projective space of dimension $n\geq 4$ with canonical singularities, where $a_1\geq \cdots \geq a_{n+1}>0$. Let $q=a_1+\cdots +a_{n+1}$ be the Fano index of $X$, and set $\beta_i\coloneq \frac{a_i}{q}$ for each $i$. Then $\beta_1,\dots \beta_{n+1}$ are the barycentric coordinates of the interior point $0$ of the integral simplex associated with $X$. We begin by establishing several lemmas. 

\begin{lem}\label{lem.an}
    If $q>y_n(2y_n-1)$, then $a_{n}=y_n$.
\end{lem}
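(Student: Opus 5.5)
The plan is to squeeze $a_n$ between two bounds: a lower bound from Theorem \ref{thm.lbofn-1}, and an upper bound from combining Theorem \ref{thm.ubton-1} with Lemma \ref{lem.keygap}.

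\textbf{Lower bound.} By Theorem \ref{thm.lbofn-1}, $a_n=q\beta_n\geq \frac{q}{2y_n}$. The hypothesis $q>y_n(2y_n-1)$ then gives $a_n>y_n-\tfrac12$. Since $a_n$ is an integer, $a_n\geq y_n$. It therefore remains to rule out $a_n\geq y_n+1$, and I argue by contradiction assuming this.

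\textbf{Setting up the upper bound.} I apply Lemma \ref{lem.keygap} with $m=n-1$, $x_i=\beta_i$ for $i\leq n-1$, and $t=\beta_n+\beta_{n+1}$. The hypotheses \eqref{eq.tailpsineq} are exactly the product-sum inequalities of Theorem \ref{thm.psineq} for $j\leq n-2$. The remaining hypothesis is $t<\frac1{y_{n-1}}$, so I split into two cases.

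\textbf{Case $t\geq \frac1{y_{n-1}}$.} Then $\beta_n\geq t/2\geq \frac1{2y_{n-1}}$. Lemma \ref{lem.lbupton-2} gives $\beta_1\cdots\beta_{n-1}>\frac{1}{4y_{n-1}^2}$, so
\[
\beta_1\cdots\beta_n>\frac{1}{8y_{n-1}^3}.
\]
Theorem \ref{thm.ubton-1} gives $\beta_1\cdots\beta_n\leq \frac1q<\frac{1}{y_n(2y_n-1)}$, which is of order $\frac{1}{2y_{n-1}^4}$. This contradicts the previous display as soon as $y_{n-1}\geq 6$, i.e.\ $n\geq 4$, after routine checking with $y_n=y_{n-1}(y_{n-1}+1)$.

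\textbf{Case $t<\frac1{y_{n-1}}$.} Lemma \ref{lem.keygap} gives $\beta_1\cdots\beta_{n-1}\geq \frac{1-ty_{n-1}}{y_{n-1}^2}$. Combining with Theorem \ref{thm.ubton-1} yields
\[
\beta_n\bigl(1-(\beta_n+\beta_{n+1})y_{n-1}\bigr)\leq \frac{y_{n-1}^2}{q}.
\]
Multiplying by $q^2$ and using $a_{n+1}\leq a_n$ gives
\[
a_n\bigl(q-2a_ny_{n-1}\bigr)\leq y_{n-1}^2q .
\]
Now I insert $a_n\geq y_n+1$ and $q\geq y_n(2y_n-1)+1$.

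\textbf{Main obstacle.} The left side is a concave quadratic in $a_n$, so it is not monotone. I must therefore also bound $a_n$ from above in order to evaluate at the correct endpoint. For this I would use Lemma \ref{lem.lbupton-2} together with Theorem \ref{thm.ubton-1}, which give $a_n<\frac{32}{9}y_{n-1}^2$, and possibly Corollary \ref{cor.bounds}. On the interval $y_n+1\leq a_n\leq \frac{32}{9}y_{n-1}^2$, the quadratic is minimized at an endpoint, and I expect both endpoints to violate the inequality. The value $q=y_n(2y_n-1)$ is exactly where equality is reached at $a_n=y_n$, which is why the margin comes from $a_n\geq y_n+1$ rather than from $q$.

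If the upper endpoint turns out not to give a contradiction (the margin there may be thin), my fallback is the age criterion. With $a_n$ large, I would compute $\age(k)$ at $k=2y_n-1$ or at $k=\lceil q/a_n\rceil$, in the style of Corollary \ref{cor.bounds}. The goal is to show that this age equals $1$, so that Proposition \ref{prop.symmetry} gives $\age(q-k)=n$, contradicting Proposition \ref{prop.criterion}.

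Concluding, $a_n\leq y_n$, and together with the lower bound, $a_n=y_n$.
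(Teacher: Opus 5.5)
Your proposal is correct and follows the paper's argument: $a_n\ge y_n$ from Theorem~\ref{thm.lbofn-1}; then, assuming $a_n\ge s_n$, Lemma~\ref{lem.keygap} with $m=n-1$ and $t=\beta_n+\beta_{n+1}\le 2\beta_n$, combined with Theorem~\ref{thm.ubton-1} and evaluated at $a_n=s_n$, which is the paper's closing ``simple calculation.'' (Your heuristic remark is off but harmless: at $a_n=y_n$ your inequality only gives $q\le 2y_n^2$, not equality at $q=y_n(2y_n-1)$.) The paper avoids your case split and your second endpoint by first deriving $\beta_n<\frac{4y_{n-1}^2}{q}<\frac{1}{4y_{n-1}}$ from Lemma~\ref{lem.lbupton-2} and Theorem~\ref{thm.ubton-1}, which forces $t<\frac{1}{y_{n-1}}$ and puts $a_n$ left of the vertex $\frac{q}{4y_{n-1}}$ of your quadratic; your bound $a_n\le\frac{32}{9}y_{n-1}^2$ also lies left of that vertex, so the quadratic is increasing on your whole interval, the margin at the upper end is not thin, and the age-criterion fallback is unnecessary.
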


\begin{proof}
    By Theorem \ref{thm.lbofn-1}, we have $a_{n}=q\beta_{n}> \frac{y_n(2y_n-1)}{2y_n}=y_n-\frac{1}{2}$, and hence $a_{n}\geq y_n$. Suppose in contrast that $a_{n}\geq y_n+1=s_n$. Then 
    \begin{equation}\label{eq.lbofbetan-1}
        \beta_{n}\geq \frac{s_n}{q}.
    \end{equation} 
    By Theorem \ref{thm.ubton-1} and Lemma \ref{lem.lbupton-2}, we have 
    \begin{equation}\label{eq.1}
        \frac{\beta_{n}}{4y_{n-1}^2}< \beta_1\cdots \beta_{n}\leq \frac{1}{q},
    \end{equation}
    which implies that $\beta_{n}<\frac{4y_{n-1}^2}{q}<\frac{4y_{n-1}^2}{y_n(2y_n-1)}$. It is easy to check that $\frac{4y_{n-1}^2}{y_n(2y_n-1)}<\frac{1}{4y_{n-1}}$ by \eqref{eq.sylseq}.
    Hence we have
    \begin{equation}
        \beta_{n}<\frac{1}{4y_{n-1}},
    \end{equation}
    and $\beta_n+\beta_{n+1}\leq 2\beta_{n}<\frac{1}{y_{n-1}}$. Applying Lemma \ref{lem.keygap} to $m=n-1$, $x_i=\beta_{i}$ and $t=\beta_n+\beta_{n+1}\leq 2\beta_{n}<\frac{1}{y_{n-1}}$, we obtain that
    \begin{equation}\label{eq.2}
        \beta_1\cdots \beta_{n-1}\geq \frac{1-ty_{n-1}}{y_{n-1}^2}\geq \frac{1-2y_{n-1}\beta_{n}}{y_{n-1}^2}.
    \end{equation}
    Consider the function $f(x)\coloneq x (1-2y_{n-1}x )$, where $f'(x)=1-4y_{n-1}x>0$ for $0<x<\frac{1}{4y_{n-1}}$. That is, $f(x)$ is increasing in the range $(0,\frac{1}{4y_{n-1}})$. Combining \eqref{eq.lbofbetan-1}-\eqref{eq.2} yields that
    \[
        \frac{1}{q}\geq \beta_1\cdots \beta_{n}\geq \frac{\beta_{n}(1-2y_{n-1}\beta_{n})}{y_{n-1}^2}\geq\frac{\frac{s_n}{q}(1-2y_{n-1}\frac{s_n}{q})}{y_{n-1}^2}.
    \]
    As $q>y_n(2y_n-1)$, we have
    \[
        1\geq \frac{s_n}{y_{n-1}^2}\left(1-2y_{n-1}\frac{s_n}{q}\right)>\frac{s_n}{y_{n-1}^2}\left(1-\frac{2s_ny_{n-1}}{y_n(2y_n-1)}\right).
    \]
    On the other hand, substituting $s_n=y_n+1=y_{n-1}(y_{n-1}+1)+1$ into the right hand side of the above inequality, simple calculation shows that it is greater than $1$, which is a contradiction. Therefore, we must have $a_{n}=y_n$.
\end{proof}

\begin{lem}\label{lem.an+1}
    If $q>y_n(2y_n-1)$, then $a_{n+1}=y_n$.
\end{lem}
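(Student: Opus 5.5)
Since $a_{n+1}\le a_n=y_n$ by Lemma \ref{lem.an}, it suffices to prove $a_{n+1}\ge y_n$. I would reuse the mechanism of the proof of Lemma \ref{lem.an}: bound $\beta_1\cdots\beta_{n-1}$ from above using Theorem \ref{thm.ubton-1} together with the now known value of $\beta_n$, and from below using Lemma \ref{lem.keygap}. This time the lower bound will be expressed in terms of $t=\beta_n+\beta_{n+1}$, so the comparison produces a lower bound on $a_{n+1}$.

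\emph{Upper bound.} By Lemma \ref{lem.an}, $\beta_n=y_n/q$. Theorem \ref{thm.ubton-1} then gives
\[
\beta_1\cdots\beta_{n-1}\le \frac{1}{q\beta_n}=\frac{1}{y_n}.
\]

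\emph{Lower bound.} Apply Lemma \ref{lem.keygap} with $m=n-1$, $x_i=\beta_i$ and $t=\beta_n+\beta_{n+1}=(y_n+a_{n+1})/q$. The hypotheses are checked as follows.
\begin{enumerate}
\item The generalized product-sum inequalities \eqref{eq.tailpsineq} are exactly the product-sum inequalities \eqref{eq.psineq} of Theorem \ref{thm.psineq} for $j\le n-2$.
\item The smallness condition $t<1/y_{n-1}$ holds because
\[
t\le \frac{2y_n}{q}<\frac{2y_n}{y_n(2y_n-1)}=\frac{2}{2y_n-1},
\]
and this is less than $1/y_{n-1}$ since $y_n=y_{n-1}(y_{n-1}+1)$ and $n\ge 4$.
\end{enumerate}
The lemma then yields
\[
\beta_1\cdots\beta_{n-1}\ge \frac{1-ty_{n-1}}{y_{n-1}^2}.
\]

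\emph{Combining.} Putting the two bounds together gives
\[
1-ty_{n-1}\le \frac{y_{n-1}^2}{y_n}=\frac{y_{n-1}}{y_{n-1}+1}.
\]
Hence $t\ge \dfrac{1}{y_{n-1}(y_{n-1}+1)}=\dfrac{1}{y_n}$, which unwinds to
\[
q\le y_n\,(y_n+a_{n+1}).
\]
With the hypothesis $q>y_n(2y_n-1)$ this forces $y_n+a_{n+1}>2y_n-1$, so $a_{n+1}\ge y_n$, and therefore $a_{n+1}=y_n$.

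I expect no serious obstacle. The only points needing care are checking that \eqref{eq.tailpsineq} is exactly what Theorem \ref{thm.psineq} supplies for this choice of $t$, and the elementary inequality $\frac{2}{2y_n-1}<\frac{1}{y_{n-1}}$, which is immediate from \eqref{eq.sylseq}.
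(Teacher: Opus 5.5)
Your proof is correct and is essentially the paper's argument: apply Lemma~\ref{lem.keygap} with $m=n-1$, $x_i=\beta_i$ and $t=\beta_n+\beta_{n+1}$, and compare the result with Theorem~\ref{thm.ubton-1} using $\beta_n=y_n/q$. The only difference is presentation. The paper argues by contradiction from $a_{n+1}\le y_n-1$, which gives $t\le (2y_n-1)/q$. You keep $t$ exact and derive $q\le y_n(y_n+a_{n+1})$ directly, which is the contrapositive of the same computation.
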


\begin{proof}
    By Lemma \ref{lem.an}, we have $a_{n+1}\leq a_{n}=y_n$. Suppose in contrast that $a_{n+1}\leq y_n-1$. Then
    \[
        t\coloneq \beta_n+\beta_{n+1}\leq \frac{2y_n-1}{q}<\frac{1}{y_n}<\frac{1}{y_{n-1}}.
    \]
    Applying Lemma \ref{lem.keygap} to $m=n-1$, $x_i=\beta_{i}$ and $t=\beta_n+\beta_{n+1}$, we obtain that
    \begin{equation}\label{eq.3}
        \beta_1\cdots \beta_{n-1}\geq \frac{1-ty_{n-1}}{y_{n-1}^2}\geq \frac{1-y_{n-1}\frac{2y_n-1}{q}}{y_{n-1}^2}.
    \end{equation}
    Combining with Theorem \ref{thm.ubton-1}, we have 
    \[
        \frac{1}{q}\geq \beta_1\cdots \beta_{n}=\beta_1\cdots \beta_{n-1}\frac{y_n}{q}\geq \left(\frac{1-y_{n-1}\frac{2y_n-1}{q}}{y_{n-1}^2}\right)\frac{y_n}{q}.
    \]
    As $q>y_n(2y_n-1)$, we have
    \[
        1\geq \frac{y_n}{y_{n-1}^2}\left(1-y_{n-1}\frac{2y_{n}-1}{q}\right)>\frac{y_n}{y_{n-1}^2}\left(1-\frac{y_{n-1}}{y_n}\right)=1,
    \]
    which is absurd. Therefore, we must have $a_{n+1}=y_n$.
\end{proof}

\begin{lem}\label{lem.ubupton-1}
    If $q>y_n(2y_n-1)$, then $a_{i}<\frac{q}{2y_n-1}(\frac{2y_n}{s_i})$ for $1\leq i\leq n-1$. 
\end{lem}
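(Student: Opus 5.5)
The plan is to argue by contradiction with the product bound of Theorem \ref{thm.ubton-1}, as in Lemmas \ref{lem.an} and \ref{lem.an+1}. Set $c\coloneq\frac{2y_n}{2y_n-1}$, so the claim reads $\beta_i<\frac{c}{s_i}$ for $1\leq i\leq n-1$.

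By Lemmas \ref{lem.an} and \ref{lem.an+1} we have $\beta_n=\beta_{n+1}=\frac{y_n}{q}$. Theorem \ref{thm.ubton-1} then gives
\[
\beta_1\cdots\beta_{n-1}\leq \frac{1}{q\beta_n}=\frac{1}{y_n}.
\]
The remaining coordinates satisfy
\[
x_j\coloneq\beta_j\ (1\leq j\leq n-1),\qquad x_1+\cdots+x_{n-1}=1-t,\qquad t=\frac{2y_n}{q}<\frac{2}{2y_n-1}<\frac{1}{y_{n-1}}.
\]
By Theorem \ref{thm.psineq} they satisfy the generalized product-sum inequalities $\PS(j)$ for $1\leq j\leq n-2$, so Lemma \ref{lem.keygap} applies with $m=n-1$. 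It yields $\beta_1\cdots\beta_{n-1}\geq \frac{1-ty_{n-1}}{y_{n-1}^2}$, and this is only barely below $\frac{1}{y_n}$ as $q$ approaches $y_n(2y_n-1)$. So the situation is extremely tight, and any extra unevenness should push the product above $\frac1{y_n}$.

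Now suppose, for contradiction, that $\beta_i\geq \frac{c}{s_i}$ for some $i\leq n-1$. I will redo the majorization proof of Lemma \ref{lem.keygap} with this extra constraint. By \cite{akn}*{Lemma 4.5}, the partial sums satisfy $x_1+\cdots+x_k\leq \frac1{s_1}+\cdots+\frac1{s_k}$. Since the entries are ordered, $\beta_j\geq\beta_i\geq \frac{c}{s_i}$ for all $j\leq i$.

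The key step is to build a comparison vector $a$ that majorizes $(x_1,\dots,x_{n-1})$ and respects both the Sylvester partial-sum bounds and the forced mass at position $i$. My first choice is
\[
a=\Bigl(\tfrac1{s_1},\dots,\tfrac1{s_{i-1}},\,\tfrac{c}{s_i},\,a_{i+1},\dots,a_{n-1}\Bigr),
\]
where the tail $a_{i+1},\dots,a_{n-1}$ is pushed as far as possible toward the extremal shape allowed by $\PS(j)$ for $j\geq i$. To handle that tail, I would rescale the tail variables and apply Lemma \ref{lem.keygap} again, now with $m=n-1-i$ and a new $t$ absorbing $\beta_n+\beta_{n+1}$ together with the excess mass at position $i$. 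The lower bound on $\beta_i$ forces either

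\begin{enumerate}
\item the first $i$ partial sums to sit strictly above the Sylvester profile, which contradicts \cite{akn}*{Lemma 4.5}, or
\item the product $\beta_1\cdots\beta_i$ to be at least $\frac{c}{s_1\cdots s_i}$ times a correction factor.
\end{enumerate}

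In case (2), I would combine this with $\PS(i)$, namely $\beta_1\cdots\beta_i\leq \beta_{i+1}+\cdots+\beta_{n+1}$, and with the Lemma \ref{lem.keygap} bound for the tail $\beta_{i+1},\dots,\beta_{n-1}$. Karamata's inequality (Lemma \ref{lem.karamata}) then gives $\beta_1\cdots\beta_{n-1}\geq \prod_j a_j$.

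The final step is to show $\prod_j a_j>\frac1{y_n}$ for $q>y_n(2y_n-1)$; heuristically this should come from the fact that $\prod_{j=1}^{n-1}\frac{c}{s_j}=\frac{c^{n-1}}{y_n}>\frac1{y_n}$. This would contradict the display above. The check should reduce to elementary manipulations with \eqref{eq.sylseq}, much like the closing computations in Lemmas \ref{lem.an} and \ref{lem.an+1}.

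The main obstacle is the construction of the majorizing vector once $\beta_i$ is forced up. Once $\beta_i\geq\frac{c}{s_i}$, the natural extremal vector $(\frac1{s_1},\dots,\frac1{s_{n-2}},\frac1{y_{n-1}}-t)$ no longer dominates the $\beta$'s. The excess at position $i$ must therefore be compensated by smaller later entries, and this must be done while keeping the partial sums of $a$ above those of $\beta$. If a single comparison vector does not work uniformly in $i$, I would split into the case $i=1$ and the case $i\geq 2$. For $i=1$, Corollary \ref{cor.bounds}(1) together with $\PS(1)$ gives direct control of $\beta_1$. For $i\geq2$, I would run the perturbation argument of the first proof of Lemma \ref{lem.keygap} on the compact domain with the added constraint $x_i\geq \frac{c}{s_i}$. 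There I expect the minimizer to make all $\PS(j)$ equalities except at $j=i-1$, which pins down an explicit extremal vector whose product can be computed.
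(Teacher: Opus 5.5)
There is a genuine gap: you never construct a comparison vector that works, and the one you write down cannot work. Let $r=\frac{2y_n}{q}-\frac1{y_n}$, so $0\le r<\frac{1}{y_n(2y_n-1)}$, and let $\sigma=(\frac1{s_1},\dots,\frac1{s_{n-2}},\frac1{s_{n-1}}-r)$, which has total $1-t$.

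Take any decreasing positive vector $a$ with total $1-t$ whose partial sums up to $j\le n-2$ dominate the Sylvester partial sums $\frac1{s_1}+\cdots+\frac1{s_j}$. That is all you can require if \cite{akn}*{Lemma 4.5} is your only information on the partial sums of the $\beta$'s. Such an $a$ majorizes $\sigma$, so Lemma \ref{lem.karamata} gives $\prod_j a_j\le\prod_j\sigma_j=\frac{1-rs_{n-1}}{y_n}\le\frac1{y_n}$, and no contradiction can result.

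Your first choice $(\frac1{s_1},\dots,\frac1{s_{i-1}},\frac{c}{s_i},a_{i+1},\dots)$ is of exactly this kind. It keeps the Sylvester partial sums up to $j=i-1$ and raises the one at $j=i$ by $e_i=\frac{1}{s_i(2y_n-1)}$, so it is more spread out than $\sigma$, not less. The heuristic $\prod_j\frac{c}{s_j}=\frac{c^{n-1}}{y_n}$ is beside the point, since only one coordinate is assumed inflated and the total is fixed at $1-t$. Your case (1) also points the wrong way: the hypothesis does not push a partial sum above the Sylvester profile, it pushes one below it.

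That is the missing idea. Apply \cite{akn}*{Lemma 4.5} at index $i$ and subtract $\beta_i\ge\frac1{s_i}+e_i$ to get
\[
\beta_1+\cdots+\beta_{i-1}\le\frac1{s_1}+\cdots+\frac1{s_{i-1}}-e_i .
\]
For $2\le i\le n-2$, consider $z=(\frac1{s_1},\dots,\frac1{s_{i-2}},\frac1{s_{i-1}}-e_i,\frac1{s_i}+e_i,\frac1{s_{i+1}},\dots,\frac1{s_{n-2}},\frac1{s_{n-1}}-r)$. Its partial sums for $j\le n-2$ are the Sylvester ones except at $j=i-1$, so $z$ majorizes $(\beta_1,\dots,\beta_{n-1})$. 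Therefore
\[
y_n\beta_1\cdots\beta_{n-1}\ge y_n\prod_j z_j=\Bigl(1-\frac{s_{i-1}}{s_i(2y_n-1)}\Bigr)\Bigl(1+\frac{1}{2y_n-1}\Bigr)\bigl(1-rs_{n-1}\bigr)>1 .
\]
The last inequality holds because the gain $\frac1{2y_n-1}$ beats the losses $\frac{s_{i-1}}{s_i(2y_n-1)}\le\frac{2}{3(2y_n-1)}$ and $rs_{n-1}<\frac{1}{y_{n-1}(2y_n-1)}$. This contradicts $y_n\beta_1\cdots\beta_{n-1}\le 1$. For $i=n-1$ the deficit $r$ must go into slot $n-2$ instead, giving the factor $(1-(r+e_{n-1})s_{n-2})(1+\frac1{2y_n-1})>1$. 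Your $i=1$ case is fine.

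Your fallback, minimizing over the full $\PS(j)$ domain with $x_i\ge\frac{c}{s_i}$ added, remains only a plan. The structure of the minimizer is guessed, and the perturbation step of Lemma \ref{lem.keygap} would have to be redone so as not to break the new constraint. The resulting product is never estimated. It is also harder than necessary: $z$ itself violates $\PS(i)$, so your minimizer would be a different, less explicit vector. Relaxing the $\PS(j)$ to the partial-sum bounds is precisely what makes the extremal vector explicit.
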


\begin{proof}
    It is equivalent to prove $\beta_i=\frac{a_i}{q}<\frac{2y_n}{s_i(2y_n-1)}=\frac{1}{s_i}+\frac{1}{s_i(2y_n-1)}$. Set $e_i\coloneq \frac{1}{s_i(2y_n-1)}$ and $r\coloneq \frac{2y_n}{q}-\frac{1}{y_n}$. By \eqref{eq.knownb} and assumption, we have 
    \[
        0\leq r<\frac{2y_n}{y_n(2y_n-1)}-\frac{1}{y_n}=\frac{1}{y_n(2y_n-1)}.
    \]
    By Lemmas \ref{lem.an} and \ref{lem.an+1}, we have $\beta_{n}=\beta_{n+1}=\frac{y_n}{q}$. Then Theorem \ref{thm.ubton-1} implies that $\beta_1\cdots \beta_{n-1}\frac{y_n}{q}=\beta_1\cdots \beta_{n}\leq \frac{1}{q}$, i.e.,
    \begin{equation}\label{eq.yleq1}
        y_n\beta_1\cdots \beta_{n-1}\leq 1.
    \end{equation}

    \begin{case}
        For $i=1$, we have $\beta_1\leq \frac{1}{2}$ by Theorem \ref{thm.psineq}, and our conclusion follows.
    \end{case}
    \begin{case}
        For $2\leq i\leq n-2$, suppose in contrast that $\beta_i\geq \frac{1}{s_i}+e_i$. Consider the vector
        \[
            z=\left(\frac{1}{s_1}, \frac{1}{s_2},\dots, \frac{1}{s_{i-1}}-e_i, \frac{1}{s_i}+e_i,\dots, \frac{1}{s_{n-2}}, \frac{1}{s_{n-1}}-r\right),
        \]
        which satisfies that
        \[
            \sum^{n-1}_{i=1}z_i=\sum^{n-1}_{i=1}\frac{1}{s_i}-\frac{2y_n}{q}+\frac{1}{y_n}=(1-\frac{1}{y_{n}})-\frac{2y_n}{q}+\frac{1}{y_n}=1-\frac{2y_n}{q}=\sum^{n-1}_{i=1}\beta_i.
        \]
        
        First, we claim that $z_1\geq \cdots\geq z_{n-1}>0$. It suffices to check the decreasing for $z_{i-1}=\frac{1}{s_{i-1}}-e_i$ and $z_i=\frac{1}{s_i}+e_i$, which comes from 
        \[
            z_{i-1}-z_i=\frac{s_i-s_{i-1}}{s_{i-1}s_i}-\frac{2}{s_i(2y_n-1)}\geq \frac{1}{s_{i-1}s_i}-\frac{2}{s_i(2y_n-1)}=\frac{2y_n-1-2s_{i-1}}{s_{i-1}s_i(2y_n-1)}>0.
        \]
        For $z_{n-1}>0$, we have $r<\frac{1}{y_n(2y_n-1)}<\frac{1}{s_{n-1}}$, and the claim follows.
    
        Second, we claim that $z$ majorizes $(\beta_1,\dots,\beta_{n-1})$. For $j\neq i-1$, we have 
        $\beta_1+\cdots+\beta_j\leq \frac{1}{s_1}+\cdots +\frac{1}{s_j}=z_1+\cdots +z_j$ by \cite{akn}*{Lemma 4.5}. For $j=i-1$, we have 
        \[
            \sum_{i=1}^{i-1}\beta_i=\sum_{i=1}^{i}\beta_i-\beta_i\leq \sum^{i}_{i=1}\frac{1}{s_i}-(\frac{1}{s_i}+e_i)=\sum_{i=1}^{i-1}z_i
        \]
        by \cite{akn}*{Lemma 4.5} and our assumption. By definition, $z$ majorizes $(\beta_1,\dots,\beta_{n-1})$. 
    
        Therefore, Karamata's inequality provides that $\beta_1\cdots \beta_{n-1} \geq z_1\cdots z_{n-1}$. Hence
        \[
            y_n\beta_1\cdots \beta_{n-1} \geq y_nz_1\cdots z_{n-1}=\left(1-\frac{s_{i-1}}{s_i(2y_n-1)}\right)\left(1+\frac{1}{2y_n-1}\right)\left(1-\frac{2y_ns_{n-1}}{q}+\frac{s_{n-1}}{y_n}\right). 
        \]
        Note that $\frac{s_{i-1}}{s_i}\leq \frac{2}{3}$ by definition and 
        \[
            \frac{2y_ns_{n-1}}{q}-\frac{s_{n-1}}{y_n}<\frac{s_{n-1}}{y_n(2y_n-1)}=\frac{1}{y_{n-1}(2y_n-1)}.
        \]
        Set $b\coloneq \frac{2}{3}$ and $c\coloneq \frac{1}{y_{n-1}}$. Then $1-b-c\geq \frac{1}{6}$ and $b+c-bc<1$, hence
        \begin{align*}
            y_n\beta_1\cdots \beta_{n-1} &>\left(1-\frac{b}{2y_n-1}\right)\left(1+\frac{1}{2y_n-1}\right)\left(1-\frac{c}{2y_n-1}\right)\\
            &=1+\frac{1-b-c}{2y_n-1}-\frac{b+c-bc}{(2y_n-1)^2}+\frac{bc}{(2y_n-1)^3}\\
            &>1+\frac{1}{6(2y_n-1)}-\frac{1}{(2y_n-1)^2}=1+\frac{2y_n-7}{6(2y_n-1)^2}>1,
        \end{align*}
        which contradicts \eqref{eq.yleq1}.
    \end{case}
    
    \begin{case}
        For $i=n-1$, suppose in contrast that $\beta_{n-1}\geq \frac{1}{s_{n-1}}+e_{n-1}$. Consider the vector
    \[
        z=\left(\frac{1}{s_1}, \frac{1}{s_2},\dots, \frac{1}{s_{n-3}},\frac{1}{s_{n-2}}-r-e_{n-1}, \frac{1}{s_{n-1}}+e_{n-1}\right),
    \]
    which satisfies as above that
    \[
        \sum^{n-1}_{i=1}z_i=\sum^{n-1}_{i=1}\frac{1}{s_i}-r=1-\frac{2y_n}{q}=\sum^{n-1}_{i=1}\beta_i.
    \]
    
    To show that $z_1\geq \cdots\geq z_{n-1}>0$, it suffices to prove $z_{n-2}\geq z_{n-1}$. Note that 
    \[
        r+2e_{n-1}<\frac{1}{y_n(2y_n-1)}+2e_{n-1}<3e_{n-1}=\frac{3}{s_{n-1}(2y_n-1)}<\frac{1}{s_{n-2}s_{n-1}}\leq \frac{1}{s_{n-2}}-\frac{1}{s_{n-1}},
    \]
    and our conclusion follows.

    To show that $z$ majorizes $(\beta_1,\dots,\beta_{n-1})$, it suffices to check the majorization for $j=n-2$  by \cite{akn}*{Lemma 4.5}. In this case, we have
    \[
        \sum_{i=1}^{n-2}\beta_i=\sum_{i=1}^{n-1}\beta_i-\beta_{n-1}\leq (\sum^{n-1}_{i=1}\frac{1}{s_i}-r)-(\frac{1}{s_{n-1}}+e_{n-1})=\sum_{i=1}^{n-2}z_i,
    \]
    which is the desired result. 

    Therefore, Karamata's inequality provides that $\beta_1\cdots \beta_{n-1} \geq z_1\cdots z_{n-1}$ and 
    \[
        y_n\beta_1\cdots \beta_{n-1} \geq y_nz_1\cdots z_{n-1}=\left(1-(r+e_{n-1})s_{n-2}\right)\left(1+e_{n-1}s_{n-1}\right). 
    \]
    Note that $(r+e_{n-1})s_{n-2}<\left(\frac{1}{y_n(2y_n-1)}+\frac{1}{s_{n-1}(2y_n-1)}\right)s_{n-2}=\frac{1}{y_{n-2}(2y_n-1)} \leq \frac{1}{2(2y_n-1)}$. 
    Hence 
    \begin{align*}
        y_n\beta_1\cdots \beta_{n-1} &>\left(1-\frac{1}{2(2y_n-1)}\right)\left(1+\frac{1}{2y_n-1}\right)\\
        &=1+\frac{1}{2(2y_n-1)}-\frac{1}{2(2y_n-1)^2}\\
        &=1+\frac{2y_n-2}{2(2y_n-1)^2}>1,
    \end{align*}
    which contradicts \eqref{eq.yleq1}. \qedhere
    \end{case}
    \setcounter{case}{0}
\end{proof}

Now we are ready to prove the main theorem.

\begin{proof}[Proof of Theorem \ref{thm.main}]
    Suppose in contrast that $q>y_n(2y_n-1)$. Then Lemmas \ref{lem.an}-\ref{lem.ubupton-1} provide that $\frac{(2y_n-1)a_i}{q}<1$ for $i\geq n$ and $\frac{(2y_n-1)a_i}{q}<\frac{2y_n}{s_i}$ for $1\leq i\leq n-1$, where $\frac{2y_n}{s_i}$ are integers. Hence
    \begin{align*}
        \age(2y_n-1)&=2y_n-1-\sum_{i=1}^{n+1}\lfloor\frac{(2y_n-1)a_i}{q}\rfloor\\
        &\geq 2y_n-1-\sum_{i=1}^{n-1}(\frac{2y_n}{s_i}-1)\\
        &=2y_n-1-2y_n(1-\frac{1}{y_n})+(n-1)=n.
    \end{align*}
    This contradicts Proposition \ref{prop.criterion}.
\end{proof}

\section{Distribution of Fano indices}\label{sec.4}

This section is devoted to the study of the distribution of Fano indices.

\begin{defn}
    We define $\mathrm{I}_{n,\wps}$ to be the set of Fano indices of $n$-dimensional well-formed weighted projective spaces with canonical singularities, together with $\{1,\dots,n\}$; define $\mathrm{I}_{n,\term}$ to be the set of indices of $n$-dimensional terminal Calabi--Yau varieties; and define $\mathrm{I}_{n,\loc}$ to be the set of indices of an $n$-dimensional log canonical singularity $P\in X$.
\end{defn}

It is already observed in \cite{liu2025}*{\S 1.1} that the above three sets coincide in low dimensions. We therefore propose the following conjecture that they coincide in all dimensions.

\begin{conj}\label{conj.coincide}
    $\mathrm{I}_{n,\wps}=\mathrm{I}_{n,\term}=\mathrm{I}_{n,\loc}$ in any dimension $n\geq 2$.
\end{conj}

\begin{prop}\label{prop.leq3}
    Conjecture \ref{conj.coincide} holds in dimension $n\leq 3$.
\end{prop}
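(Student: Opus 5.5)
Proposition \ref{prop.leq3} claims that $\mathrm{I}_{n,\wps}=\mathrm{I}_{n,\term}=\mathrm{I}_{n,\loc}$ for $n\leq 3$. The plan is to compute each of the three sets explicitly in each dimension $n=1,2,3$ and compare the resulting lists. For $n=1$ the only well-formed weighted projective space is $\mathbb P^1$, of index $2$. Together with $\{1\}$ this gives $\mathrm{I}_{1,\wps}=\{1,2\}$. This agrees with the indices of elliptic curves ($1$), of a point-type log canonical curve singularity ($1$), and of a germ with boundary ($2$).

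For $\mathrm{I}_{n,\term}$ and $\mathrm{I}_{n,\loc}$ with $n\leq 3$, I will not reprove anything. I will quote the known classifications as collected in \cite{liu2025}*{\S 1.1} and the references there. In dimension $2$ these are the indices of log Enriques surfaces and klt/lc surface germs, with maximum $19$ or the corresponding list. In dimension $3$ one has the lists for terminal Calabi--Yau threefolds and lc threefold singularities, whose largest entry is $66$; this matches $y_3(2y_3-1)=6\cdot 11=66$ from Theorem \ref{thm.main}.

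The actual work is computing $\mathrm{I}_{n,\wps}$ for $n=2,3$, which is a finite enumeration. By \eqref{eq.knownb} we have $q\leq 2y_n^2$, and Theorem \ref{thm.main} sharpens this to $q\leq y_n(2y_n-1)$, giving $q\leq 6$ for $n=2$ and $q\leq 66$ for $n=3$. For each $q$ in range, I enumerate all non-increasing tuples $(a_1,\dots,a_{n+1})$ with sum $q$. The enumeration is pruned by Corollary \ref{cor.bounds}(1)--(2), which give $q/n\leq a_1\leq q/2$ and $a_i\leq q/(1+i)$. I then keep only the well-formed tuples satisfying the canonicity criterion of Proposition \ref{prop.criterion}, namely $\age(k)\neq n$ for $2\leq k\leq q-2$. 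Finally I take the set of sums that occur and adjoin $\{1,\dots,n\}$.

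In dimension $2$ this is a quick hand check: the candidates are $\mathbb P^2$, $\mathbb P(1,1,2)$ and $\mathbb P(1,2,3)$, giving $\{1,2,3,4,6\}$. In dimension $3$ it is a short computer search, or a hand search organized by $a_1$ using the bounds above.

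The main obstacle is not the enumeration but matching conventions with the cited lists. I must check that "index" means the same thing in all three settings (Cartier index of $K$ versus the Fano index), and that the adjoined $\{1,\dots,n\}$ exactly accounts for small indices that arise on the Calabi--Yau and singularity side but not from weighted projective spaces. If a value appeared in one list and not the other, I would first recheck well-formedness and the boundary cases $k=1,q-1$ of the age criterion. Once the conventions are aligned, the equality of the three finite sets can be read off directly.
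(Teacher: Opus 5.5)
Your route is the paper's. Its proof lists $\mathrm{I}_{n,\wps}$: the three surfaces $\mathbb P^2$, $\mathbb P(1,1,2)$, $\mathbb P(1,2,3)$ for $n=2$, and \cite{kasprzyk}*{Table 3} for $n=3$, which your search would reproduce. It then quotes the classifications on the Calabi--Yau and singularity sides. The gap is that the side you quote, which carries all the content, is misidentified in your proposal and never actually written down.

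Terminal surfaces are smooth, so $\mathrm{I}_{2,\term}$ consists of the indices of smooth surfaces with $K\equiv 0$:
\begin{itemize}
\item $1$ for K3 and abelian surfaces;
\item $2$ for Enriques surfaces;
\item $2,3,4,6$ for bielliptic surfaces.
\end{itemize}
Likewise $\mathrm{I}_{2,\loc}$ must be read, as in Fujino's theorem, for log canonical germs that are not klt: simple elliptic and cusp singularities and their quotients. This again gives $\{1,2,3,4,6\}$. Klt germs cannot enter, since $\frac1r(1,1)$ has index $r/\gcd(r,2)$, which is unbounded. Log Enriques surfaces may have quotient singularities, so they are not terminal Calabi--Yau surfaces in general, and their indices go well beyond $6$. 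There is no ``maximum $19$'' here. Comparing $\{1,2,3,4,6\}$ with the lists you name would therefore fail.

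What the proof has to state is the common set:
\begin{itemize}
\item $\{m\mid\varphi(m)\le 2\}$ for $n=2$;
\item $\{m\mid\varphi(m)\le 20\}\setminus\{60\}$ for $n=3$, supported by \cite{masamura}*{Proposition 3.2} for terminal Calabi--Yau threefolds and \cite{fujino}*{Theorem 0.1} for threefold lc singularities.
\end{itemize}
Matching the maximum $66$ is only a consistency check, not equality of sets.

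Drop the case $n=1$. Conjecture \ref{conj.coincide} is stated only for $n\ge 2$, and it actually fails for $n=1$. Elliptic curves have index $1$ and normal curve germs are smooth, so $\mathrm{I}_{1,\term}=\mathrm{I}_{1,\loc}=\{1\}\neq\{1,2\}$. A ``germ with boundary'' is not part of the definition.

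For the enumeration, note two technical points:
\begin{itemize}
\item The argument for Theorem \ref{thm.main} in Section 3 assumes $n\ge 4$, so bound your search by \eqref{eq.knownb}, i.e.\ $q\le 8$ and $q\le 72$ respectively.
\item Corollary \ref{cor.bounds} requires $q\ge n+4$, so the few smaller values of $q$ must be checked directly.
\end{itemize}

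Finally, there are no conventions to reconcile. The conjecture deliberately compares the Fano index $\sum a_i$ with the Cartier index of $K$, and $\{1,\dots,n\}$ is adjoined by definition.
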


\begin{proof}
    For $n=2$, the well-formed weighted projective spaces are $\mathbb P^2, \mathbb P(2,1,1), \mathbb P(3,2,1)$, with Fano indices $3,4$ and $6$ respectively. Combining this with the well-known facts (cf. \cite{machida-oguiso}, \cite{fujino}), the common index set is 
    \[
        \{m\in\mathbb Z_{>0}\mid \varphi(m)\leq 2\}=\{1,2,3,4,6\},
    \]
    where $\varphi(m)$ is the Euler function of $m$.
    For $n=3$, it follows from \cite{kasprzyk}*{Table 3},  \cite{masamura}*{Proposition 3.2} and \cite{fujino}*{Theorem 0.1} that the common index set is 
    \[
        \{m\in\mathbb Z_{>0}\mid \varphi(m)\leq 20\} \setminus \{60\}. \qedhere
    \]
\end{proof}

From now on, we study Fano indices of $4$-dimensional well-formed weighted projective spaces with canonical singularities. The upper bound $q=y_4(2y_4-1)=3486$ is also obtained in \cite{kasprzyk}*{Theorem 3.6}. The sets $A_i$ in the following theorem are disjoint and the subscript $i$ in $A_i$ refers to the initial term $y_i$ for $1\leq i\leq 4$.

\begin{thm}\label{thm.disindim4}
    We have $\mathrm{I}_{4,\wps} \subseteq A_1\cup A_2\cup A_6\cup A_{42}$, where
    \begin{align*}
        A_1\coloneq &\{m\in \mathbb Z \mid1\leq m\leq 1743\},\\
        A_2\coloneq &\{m\in2\mathbb Z \mid 1743< m\leq 2324\},\\
        A_6\coloneq &\{m\in6\mathbb Z \mid 2324< m\leq 2988\},\\
        A_{42}\coloneq &\{m\in 42\mathbb Z \mid 2988< m\leq 3486\}.
    \end{align*}
    Moreover, $\mathrm{I}_{4,\wps} \subseteq \{m\in\mathbb Z_{>0}\mid \varphi(m)\leq 984\}$.
\end{thm}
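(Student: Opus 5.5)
The plan is to determine $\mathrm{I}_{4,\wps}$ \emph{completely} by a finite, computer-assisted enumeration, and then read off both inclusions from the resulting list. A purely theoretical argument cannot suffice for the second assertion. For instance, $A_1$ contains primes such as $1741$, and $\varphi(1741)=1740>984$. So the bound $\varphi(q)\le 984$ has to come from knowing exactly which $q\le 1743$ occur, not from the first inclusion alone. I would therefore aim to produce the full list of $5$-tuples $a_1\ge\cdots\ge a_5>0$ that are well-formed and satisfy the age criterion (Proposition \ref{prop.criterion}) with $n=4$. Then I would check directly that every $q$ in the list lies in $A_1\cup A_2\cup A_6\cup A_{42}$ and satisfies $\varphi(q)\le 984$.

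\textbf{Step 1: make the search space finite and small.} By Theorem \ref{thm.main}, $q\le y_4(2y_4-1)=3486$. For $q\ge n+4=8$, Corollary \ref{cor.bounds} gives $a_i\le q/(i+1)$ and $a_1\ge q/4$, together with the conditional lower bounds in part (3). Theorem \ref{thm.lbofn-1} gives $a_4\ge q/(2y_4)=q/84$. Theorem \ref{thm.ubton-1} gives $a_1a_2a_3a_4\le q^3$, and the product–sum inequalities (Theorem \ref{thm.psineq}) give $a_1\cdots a_j\le q^{j-1}(a_{j+1}+\cdots+a_5)$. The small values $q<8$ are handled by hand.

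Concretely, I would loop over $q$, then over $a_5\le a_4\le a_3\le a_2$ within these ranges, and set $a_1=q-a_2-a_3-a_4-a_5$. Each inequality is used to cut a branch as early as possible: the product bound constrains $a_2,a_3$ once $a_4$ is small, and the lower bound on $a_4$ forces $a_5$ to be comparatively large when $q$ is large. A tuple that survives is tested for well-formedness, and then for $\age(k)\ne 4$ for $2\le k\le q-2$. The age test should be aborted as soon as a bad $k$ is found. Testing small $k$ first (e.g.\ $k=4,5,\dots$, as in the proof of Corollary \ref{cor.bounds}) should kill most candidates almost immediately.

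\textbf{Step 2: extract the index set and verify.} From the surviving tuples I would form $\mathrm{I}_{4,\wps}$ by adding $\{1,2,3,4\}$. I would then check mechanically that every element greater than $1743$ is even, every element greater than $2324$ is divisible by $6$, and every element greater than $2988$ is divisible by $42$, and that $\varphi(m)\le 984$ for all $m$ in the set. As a sanity check on the large-$q$ part, I would also try a theoretical explanation mirroring Lemmas \ref{lem.an}--\ref{lem.ubupton-1}. The idea is that for $q$ beyond $\tfrac12\cdot 3486$, $\tfrac23\cdot3486$ or $\tfrac67\cdot3486$, the age at a suitable $k$ forces the largest weights to be close to $q/2$, $q/3$, $q/7$. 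Canonicity then pushes them to equal $q/s_i$ exactly, which yields divisibility of $q$ by $2$, $6$ or $42$. This is only a cross-check; the enumeration is the actual proof.

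\textbf{Main obstacle.} The hardest step is making the enumeration feasible. A naive loop over all $5$-tuples with sum at most $3486$ has on the order of $10^{15}$ cases. The key is to order the loops so that the strongest constraints (the product bound from Theorem \ref{thm.ubton-1} and the lower bounds on $a_4$ and $a_5$) prune the inner loops, and to make the age test exit early. If this is still too slow, I would first try parallelizing over $q$ in multithreaded C++. Failing that, I would cross-check against an existing classification of $4$-dimensional lattice simplices with one interior lattice point, whose weight systems are exactly the canonical well-formed weighted projective spaces considered here.
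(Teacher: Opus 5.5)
Your proposal is correct and follows essentially the same route as the paper. Both run a pruned computer enumeration of well-formed tuples satisfying Kasprzyk's age criterion, then check the resulting list directly for both inclusions, and you rightly note that the $\varphi$-bound does not follow from the first inclusion alone. The only difference is in the pruning: you cap at $q\le 3486$ via Theorem \ref{thm.main} and use the lattice-simplex product bounds, whereas the paper searches up to $3528$ using its age-based Claim bounding $a_2,a_3$. This affects efficiency, not correctness.
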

\begin{proof}
    One can verify this using \cite{bk}, or by running a computer search for the Fano index $q=a_1+\cdots+a_5$ of $\mathbb P(a_1,\dots, a_5)$. To make the search more efficient, we first establish sharper bounds for $a_i$ by Corollary \ref{cor.bounds}. 
    \begin{claim}
    Suppose $q\geq 9$. 
    \begin{enumerate}
        \item If $a_1=\frac{q}{2}$, then $\frac{q}{7}\leq a_2\leq \frac{q}{3}$ and $\frac{q-a_1-a_2}{3}\leq a_3\leq \frac{q}{5}$.
        \item If $\frac{2q}{5}<a_1<\frac{q}{2}$, then $\frac{q}{6}\leq a_2\leq \frac{q}{3}$ and $\frac{q-a_1-a_2}{3}\leq a_3\leq \frac{q}{5}$.
        \item If $a_1=\frac{2q}{5}$, then  $\frac{q}{6}\leq a_2\leq \frac{q}{3}$ and $\frac{q-a_1-a_2}{3}\leq a_3\leq \frac{q}{4}$.
        \item If $\frac{q}{4}\leq a_1<\frac{2q}{5}$, then $\frac{q}{5}\leq a_2\leq \frac{q}{3}$.
    \end{enumerate}
    \end{claim}

    \begin{proof}
        It follows from Corollary \ref{cor.bounds} that $\frac{q}{4}\leq a_1\leq \frac{q}{2}$, $a_i\leq \frac{q}{1+i}$ and if $a_1<\frac{2q}{3+i}$, then $a_i\geq \frac{q}{3+i}$ for $1<i\leq 5$.
    
        If $a_1> \frac{2q}{5}$, then $a_3\leq \frac{q}{5}$. Otherwise, $q<5a_3\leq 5a_2\leq \frac{5q}{3}<2q$ and $a_5\leq a_4\leq \frac{q}{5}$. Hence 
        \[
            1\leq \age(5)\leq 5-2-2-\lfloor\frac{5a_4}{q}\rfloor=1-\lfloor\frac{5a_4}{q}\rfloor\leq 1.
        \]
        It follows that $\age(5)=1$ and $\frac{5a_4}{q}\neq 1$, and hence $\age (q-5)=4$ by Proposition \ref{prop.symmetry}. This contradicts Proposition \ref{prop.criterion}.
    
        In case (1), if $a_2<\frac{q}{7}$, then $\age(7)=7-\lfloor\frac{7a_1}{q}\rfloor=4$, contradicting Proposition \ref{prop.criterion}. 
        
        In cases (2) and (3), if $a_2<\frac{q}{6}$, then $\age(6)=6-\lfloor\frac{6a_1}{q}\rfloor=4$, a contradiction. 
    
        In case (4),  if $a_2<\frac{q}{5}$, then $\age(5)=5-\lfloor\frac{5a_1}{q}\rfloor=4$, a contradiction.
    \end{proof}

    Our goal is then to search for $q\leq 3528$ (see \eqref{eq.knownb}) whose partition $(a_1, a_2, a_3, a_4,a_5)$ meets the following conditions:
    \begin{enumerate}
        \item $a_1\geq a_2 \geq a_3 \geq a_4 \geq a_5>0$ lie within the ranges given in the above Claim;
        \item the tuple is well-formed, i.e., $\gcd(a_1,\dots, \hat a_i,\dots, a_{n+1})=1$ for every $i$;
        \item $\age(k)\neq 4$ for any $2\leq k\leq q-2$.
    \end{enumerate}
    
    It took approximately 155 hours to list all possible $q$ on a personal computer; see https://doi.org/10.5281/zenodo.21806548 for the output list and the code. Now the conclusions are obtained by directly checking the resulting list.
\end{proof}

\begin{rem}
    Note that $\varphi(y_4(2y_4-1))=\varphi(3486)=984$, yet $3486$ is not the largest number $m$ such that $\varphi(m)\leq 984$. In general, we can ask the following question:
    \begin{ques}
        Do we have $\varphi(m)\leq \varphi(y_n(2y_n-1))$ for any $m\in \mathrm{I}_{n,\wps}$?
    \end{ques}
\end{rem}

\begin{rem}
Suppose 
\[
    \mathrm{J}_{4}\coloneq \{m\in\mathbb Z_{>0}\mid  m\leq 3486,~ \varphi(m)\leq 984\}\backslash \mathrm{I}_{4,\wps}.
\]
This set can be explicitly described by the resulting list of Theorem \ref{thm.disindim4}; for instance, $J_4\cap A_{42}= \{3444=y_4(2y_4-2)\}$. For the remaining parts, we can tell that $\# \{J_4\cap A_{6}\}=26$,  $\# \{J_4\cap A_{2}\}=84$, and  $\# \{J_4\cap A_{1}\}=157$.
\end{rem}

Suggested by Theorem \ref{thm.disindim4}, we propose the following conjecture. 

\begin{conj}\label{conj.condis}
    Let $X=\mathbb P(a_1,...,a_{n+1})$ be a well-formed weighted projective space of dimension $n\geq 4$, where $q=a_1+\cdots +a_{n+1}$ is the Fano index of $X$. 
    \begin{enumerate}
        \item If $q>\frac{y_i}{s_i}y_n(2y_n-1)$ for $1\leq i<n$, then $s_i\mid q$;
        \item If $q>y_{n-1}^2(2y_n-1)$, then $y_n\mid q$.
    \end{enumerate}
\end{conj}

\begin{rem}
    Note that Conjecture \ref{conj.condis}(1) does not hold in dimension $3$, as shown by \cite{kasprzyk}*{Table 3}. Note also that Conjecture \ref{conj.condis}(2) is a consequence of Conjecture \ref{conj.condis}(1), as if $q>y_{n-1}^2(2y_n-1)=\frac{y_{n-1}}{s_{n-1}}y_n(2y_n-1)$, then $s_i\mid q$ for all $1\leq i<n$, where $s_i$ are pairwise coprime; hence $y_n=s_1\cdots s_{n-1}\mid q$. As Theorem \ref{thm.disindim4} shows, Conjecture \ref{conj.condis}(1)(2) hold in dimension $4$.
\end{rem}

\begin{rem}
    Conjecture \ref{conj.condis} predicts that, for large $q$, there exists a gap $y_n$ between consecutive possible Fano indices. Hence assuming Conjecture \ref{conj.condis}(2) holds, we can regain that $q\leq y_n(2y_n-1)$.
    We can further conjecture the following two special cases:
    \begin{enumerate}
        \item if $q=y_n(2y_n-1)$ then $X\cong \mathbb P (\frac{q}{s_1},\frac{q}{s_2}, \dots,\frac{q}{s_{n-2}}, y_{n-1}, y_{n-1}-1)$ in \cite{wang}*{Theorem 3.2};
        \item $q\neq y_n(2y_n-2)$.
    \end{enumerate} 
    Both cases are expected to follow by the same argument as in Theorem \ref{thm.main}. 
\end{rem}

Assuming Conjecture \ref{conj.coincide} holds, we would also gain some insight into the distribution of terminal Calabi--Yau varieties.

\begin{prop}
    Suppose Conjecture \ref{conj.coincide} holds in dimension $4$. Then the index of a terminal Calabi--Yau $4$-fold is at most $3486$.  
\end{prop}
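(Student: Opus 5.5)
This is immediate from the coincidence conjecture combined with the dimension-4 bound. Assume Conjecture \ref{conj.coincide} holds for $n=4$, so $\mathrm{I}_{4,\term}=\mathrm{I}_{4,\wps}$. Let $m$ be the index of a terminal Calabi--Yau $4$-fold. Then $m\in\mathrm{I}_{4,\term}=\mathrm{I}_{4,\wps}$, and it suffices to show that every element of $\mathrm{I}_{4,\wps}$ is at most $3486$.

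The set $\mathrm{I}_{4,\wps}$ has two kinds of elements, by definition. The first kind are the fillers $\{1,2,3,4\}$, which are trivially at most $3486$. The second kind are Fano indices $q$ of $4$-dimensional well-formed weighted projective spaces with canonical singularities. For these, Theorem \ref{thm.main} with $n=4$ gives
\[
q\leq y_4(2y_4-1)=42\cdot 83=3486,
\]
using $y_4=s_4-1=42$. Alternatively, Theorem \ref{thm.disindim4} gives the same conclusion, since $\mathrm{I}_{4,\wps}\subseteq A_1\cup A_2\cup A_6\cup A_{42}$ and every element of these sets is at most $3486$.

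Combining the two cases gives $m\leq 3486$. There is no real obstacle: the only point to check is that the auxiliary integers $\{1,\dots,n\}$ in the definition of $\mathrm{I}_{n,\wps}$ do not exceed the bound, which is clear.
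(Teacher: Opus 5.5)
Your proposal is correct and matches the paper's intended argument. The paper states this proposition without a written proof, as an immediate consequence of $\mathrm{I}_{4,\term}=\mathrm{I}_{4,\wps}$ together with the bound $y_4(2y_4-1)=3486$ from Theorem \ref{thm.main} (equivalently Theorem \ref{thm.disindim4}), with the fillers $\{1,2,3,4\}$ trivially below this bound.
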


\begin{rem}
    The largest known third Betti number $b_3$ of a smooth Calabi--Yau $3$-fold is $984=\varphi(3486)$, due to \cite{kaeuzer-skarke}. It arises from the complete classification of $4$-dimensional reflexive polyhedra (equivalently, Gorenstein toric Fano 4-folds): taking general members of the anti-canonical linear systems yields Gorenstein Calabi--Yau 3-folds, whose resolutions give smooth Calabi--Yau 3-folds, one of which attained the largest known third Betti number. Combined with the lower dimension cases, we would like to conjecture that 
    \begin{equation}
        b_n\leq \varphi(y_{n+1}(2y_{n+1}-1))+1+(-1)^n
    \end{equation}
    for all Calabi--Yau $n$-folds. Note that even if this conjecture turns out to be true, it does not mean the boundedness of Calabi--Yau $n$-folds.
\end{rem}

\end{document}